\DeclareFontFamily{U}{wncy}{}
\DeclareFontShape{U}{wncy}{m}{n}{<->wncyr10}{}
\DeclareSymbolFont{mcy}{U}{wncy}{m}{n}
\DeclareMathSymbol{\Sh}{\mathord}{mcy}{"58} 
\theoremstyle{plain}
\newtheorem{theorem}{Theorem}[section]
\newtheorem{corollary}[theorem]{Corollary}
\newtheorem{definition}[theorem]{Definition}
\newtheorem{proposition}[theorem]{Proposition}
\newtheorem{remark}[theorem]{Remark}
\renewcommand{\phi}{\varphi}
\DeclareMathOperator{\sinc}{sinc}
\title{A Selection of Distributions and Their Fourier Transforms with Applications in Magnetic Resonance Imaging}
\author{Kaibo Tang}
\date{\today}
\begin{document}

\maketitle

\section{Introduction}

This note presents a rigorous introduction to a selection of distributions along with their Fourier transforms, which are commonly encountered in signal processing, and, in particular, magnetic resonance imaging (MRI). In contrast to many textbooks on the principles of MRI which place more emphasis on the signal processing aspect \cite{liang_principles_2000,nishimura_principles_2010}, this note will take a more mathematical approach. In particular, we will make explicit the underlying topological space of interest and clarify the exact sense in which these distributions and their Fourier transforms are defined. Key results presented in this note involve the Poisson summation formula and the Fourier transform of a Gaussian function via an ordinary differential equation (ODE) argument, etc.

Although the readers are expected to have prior exposure to functional analysis \cite{rudin_functional_2007} and distribution theory, this note is intended to be self-contained. To this end, we will begin with a brief review of some of the basic definitions and results.

\begin{definition}[Schwartz function]
    A Schwartz function is a function $f \in C^\infty(\mathbb{R}^n)$ such that for all $N \in \mathbb{N}$ we have $q_N(f) < \infty$ where
    \begin{equation}
        q_N(f) = \sup_{x \in \mathbb{R}^n, \vert\alpha\vert \le N} (1 + \vert x\vert)^N \vert\partial^\alpha f(x)\vert.
    \end{equation}
    We denote by $\mathcal{S}(\mathbb{R}^n) =: \mathcal{S}$ the Fr\'echet space of Schwartz functions whose topology is induced by the countable family of seminorms $\{q_N: N \in \mathbb{N}\}$.
\end{definition}

\begin{definition}[Tempered distribution]
    A tempered distribution is a continuous linear functional $u: \mathcal{S} \to \mathbb{C}$. We denote by $\mathcal{S}'(\mathbb{R}^n) =: \mathcal{S}'$ the space of tempered distributions.
\end{definition}

As usual, we use $\langle \cdot, \cdot \rangle$ to denote the dual pairing on $\mathcal{S}' \times \mathcal{S}$. The following proposition gives a convenient criterion for checking whether a linear functional is continuous.

\begin{proposition}\label{criterion}
    Let $u: \mathcal{S} \to \mathbb{C}$ be linear. Then, $u$ is continuous if and only if there exists $N, C_N$ such that
    \begin{equation}
        \vert\langle u, \phi \rangle\vert \le C_N q_N(\phi) \quad \forall \phi \in \mathcal{S}.
    \end{equation}
\end{proposition}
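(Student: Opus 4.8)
The plan is to exploit the fact that the topology on $\mathcal{S}$ is generated by the countable, \emph{increasing} family $\{q_N\}$: since $1 + |x| \ge 1$ and the supremum defining $q_N$ ranges over the enlarging index set $\{|\alpha| \le N\}$, one has $q_N(\phi) \le q_{N'}(\phi)$ whenever $N \le N'$. This monotonicity is what lets a finite collection of seminorm conditions collapse to a single one, and it is the structural feature I would lean on throughout.

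For the easy direction ($\Leftarrow$), suppose the estimate $\vert\langle u, \phi\rangle\vert \le C_N q_N(\phi)$ holds. Because $\mathcal{S}$ is a Fr\'echet space it is metrizable, so continuity of the linear map $u$ is equivalent to sequential continuity, and by linearity to sequential continuity at $0$. If $\phi_k \to 0$ in $\mathcal{S}$ then by definition $q_M(\phi_k) \to 0$ for every $M$, in particular for the fixed index $N$; the estimate then forces $\vert\langle u, \phi_k\rangle\vert \le C_N q_N(\phi_k) \to 0$. Hence $u$ is continuous.

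For the main direction ($\Rightarrow$), I would unwind the definition of continuity at $0$. The set $\{z \in \mathbb{C} : \vert z\vert < 1\}$ is an open neighborhood of $0 = \langle u, 0\rangle$, so its preimage under $u$ is an open neighborhood of $0$ in $\mathcal{S}$. By definition of the seminorm topology this preimage contains a basic open set of the form $\{\phi : q_{N_i}(\phi) < \varepsilon_i, \; i = 1, \dots, k\}$. Here is where monotonicity does the work: setting $N = \max_i N_i$ and $\varepsilon = \min_i \varepsilon_i$, the single ball $\{\phi : q_N(\phi) < \varepsilon\}$ is contained in that basic set, so $q_N(\phi) < \varepsilon$ already implies $\vert\langle u, \phi\rangle\vert < 1$.

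It remains to convert this qualitative statement into the quantitative bound by a homogeneity argument. For $\phi$ with $q_N(\phi) \ne 0$, rescale to $\psi = \tfrac{\varepsilon}{2 q_N(\phi)}\phi$, which satisfies $q_N(\psi) = \varepsilon/2 < \varepsilon$; applying the implication together with the linearity of $u$ yields $\vert\langle u, \phi\rangle\vert \le \tfrac{2}{\varepsilon} q_N(\phi)$, so $C_N = 2/\varepsilon$ works. The only subtlety is the degenerate case $q_N(\phi) = 0$: then $q_N(t\phi) = 0 < \varepsilon$ for every $t > 0$, whence $t\vert\langle u, \phi\rangle\vert < 1$ for all $t$, which forces $\langle u, \phi\rangle = 0$ and makes the inequality hold trivially. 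I expect this reduction from a finite family of seminorms to a single seminorm --- together with correctly invoking metrizability in the converse --- to be the main point; the rescaling is then routine.
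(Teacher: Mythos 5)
Your proof is correct: the reduction of a finite intersection of seminorm balls to a single $q_N$ via the monotonicity $q_N \le q_{N'}$ for $N \le N'$, the rescaling argument, and the careful handling of the degenerate case $q_N(\phi) = 0$ are all sound, as is the sequential-continuity argument for the converse. For comparison, the paper states this proposition without any proof (treating it as a standard fact about the seminorm topology on $\mathcal{S}$), so your argument --- which is the standard one --- fills in a gap rather than diverging from anything.
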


\begin{corollary}\label{L1}
    Every Lebesgue integrable function is a tempered distribution in the distributional sense.
\end{corollary}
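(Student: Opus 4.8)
The plan is to realise each $f \in L^1(\mathbb{R}^n)$ as a tempered distribution $u_f$ via integration against test functions, and then to establish continuity by invoking the seminorm criterion of Proposition~\ref{criterion}. Concretely, for $\phi \in \mathcal{S}$ I would set
\begin{equation*}
    \langle u_f, \phi \rangle := \int_{\mathbb{R}^n} f(x)\phi(x)\,dx.
\end{equation*}
Linearity of $u_f$ will then be immediate from linearity of the Lebesgue integral, so the real work is to check that the pairing is well-defined and continuous.

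For well-definedness, the first step is to observe that every Schwartz function is bounded: taking $N = 0$ in the definition of the seminorms gives $\|\phi\|_\infty = q_0(\phi) < \infty$. Consequently $|f\phi| \le q_0(\phi)\,|f|$ pointwise, and since $f \in L^1$ the right-hand side is integrable; hence the defining integral converges absolutely and $u_f \colon \mathcal{S} \to \mathbb{C}$ is genuinely defined.

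The heart of the argument is continuity, and by Proposition~\ref{criterion} it reduces to producing a single seminorm estimate. Carrying the bound above under the integral sign yields
\begin{equation*}
    |\langle u_f, \phi \rangle| \le \int_{\mathbb{R}^n} |f(x)|\,|\phi(x)|\,dx \le q_0(\phi)\int_{\mathbb{R}^n} |f(x)|\,dx = \|f\|_{L^1}\,q_0(\phi),
\end{equation*}
so the criterion is satisfied with $N = 0$ and $C_0 = \|f\|_{L^1}$. I expect this to be the decisive step, though it is more bookkeeping than obstacle: the only thing to get right is matching the $L^\infty$ control on $\phi$ to the lowest seminorm $q_0$, after which $\|f\|_{L^1}$ supplies the constant.

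The one genuinely delicate point, which is what makes the identification faithful rather than merely formal and justifies the phrase \emph{in the distributional sense}, is that the assignment $f \mapsto u_f$ be injective, so that distinct equivalence classes of integrable functions yield distinct distributions. I would dispatch this via the classical fact that $\int_{\mathbb{R}^n} f\phi = 0$ for all $\phi \in \mathcal{S}$ forces $f = 0$ almost everywhere, which follows from the density of test functions together with the du Bois-Reymond lemma. This is where a little care is needed, but it is standard and lies outside the continuity estimate that forms the core of the corollary.
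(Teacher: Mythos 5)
Your proposal is correct and follows essentially the same route as the paper: the paper's proof is exactly the estimate $\vert\langle f, \phi\rangle\vert \le \Vert f\Vert_{L^1}\sup_x\vert\phi(x)\vert$ followed by an appeal to Proposition~\ref{criterion} (the paper takes $N=1$ rather than $N=0$, an immaterial difference since $q_0 \le q_1$). Your additional remarks on absolute convergence and on injectivity of $f \mapsto u_f$ are sound supplements but lie outside what the paper's proof (or statement) actually requires.
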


\begin{proof}
    Fix $f \in L^1(\mathbb{R}^n) =: L^1$ and fix $\phi \in \mathcal{S}$. It is clear that $f: \mathcal{S} \to \mathbb{C}$ is linear. Hence, we only need to check continuity. Notice that
    \begin{equation}
        \vert \langle f, \phi \rangle \vert = \left \vert \int_{\mathbb{R}^n} f(x) \phi(x) \ dx \right \vert \le \int_{\mathbb{R}^n} \vert f(x) \vert \vert \phi(x) \vert \ dx \le \Vert f \Vert_{L^1} \sup_{x \in \mathbb{R}^n} \vert \phi(x) \vert. 
    \end{equation}
    Take $N = 1$ and $C_N = \Vert f \Vert_{L^1}$ to finish.
\end{proof}

The rest of the note is organized as follows. Section~\ref{sec2} reviews the Fourier transform, including how it is initially defined on $L^1$ and extended to all of $\mathcal{S}'$. Section~\ref{sec3} covers a selection of distributions along with their Fourier transforms: rectangular (\ref{sec31}), Gaussian (\ref{sec32}), Dirac delta (\ref{sec33}, \ref{sec34}), constant (\ref{sec33}), exponential (\ref{sec34}), and Dirac comb (\ref{sec35}).

\section{The Fourier transform}\label{sec2}

In this section, our aim is twofold. We will first outline the typical procedure of defining the Fourier transform on the space of tempered distributions $\mathcal{S}'$. Later, we will clarify how the Fourier transform defined using the mathematical convention differs from the engineering convention.

\subsection{The Fourier transform on the space of tempered distributions}

We begin by defining the Fourier transform for function in $L^1(\mathbb{R}^n) =: L^1$, which is given by an absolute convergent integral. To avoid confusion, we will denote the Fourier transform defined here as $\mathcal{F}_1$ or $\hat{\cdot}_1$. 

\begin{definition}[Fourier transform I]\label{F1}
    Let $f \in L^1$. For $\xi \in \mathbb{R}^n$, define the Fourier transform of $f$, denoted $\mathcal{F}_1f$ or $\hat{f}_1$, by
    \begin{equation}
        \mathcal{F}_1f(\xi) = \hat{f}_1(\xi) := \int_{\mathbb{R}^n} e^{-ix \cdot \xi} f(x) \ d'x, \quad \text{where } d'x := (2\pi)^{-n/2} \ dx.
    \end{equation}
\end{definition}

The following formula describes what is frequently referred to as the ``inverse Fourier transform''.\footnote{Use the Fourier inversion formula with \emph{caution} since it is valid only for a subset of functions!}

\begin{theorem}[Fourier inversion formula]
    For $g \in \mathcal{S} \subset L^1$, we have
    \begin{equation}
        g(x) = \int_{\mathbb{R}^n} e^{ix \cdot \xi} \hat{g}_1(\xi) \ d'\xi, \quad \text{where } d'\xi := (2\pi)^{-n/2} \ d\xi.
    \end{equation}
\end{theorem}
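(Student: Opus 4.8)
The plan is to prove the identity by Gaussian regularization, since the naive approach---substituting the definition of $\hat{g}_1$ into the right-hand side and swapping the order of integration---fails. Indeed, the resulting double integral $\iint e^{i(x-y)\cdot\xi} g(y)\, d'y\, d'\xi$ has an integrand of modulus $|g(y)|$, which is not integrable over $\mathbb{R}^n_y \times \mathbb{R}^n_\xi$, so Fubini does not apply directly. The remedy is to insert a Gaussian factor that decays in $\xi$, carry out the now-legitimate interchange, and then remove the factor in the limit.

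Concretely, for $\epsilon > 0$ I would define
\begin{equation}
    I_\epsilon(x) := \int_{\mathbb{R}^n} e^{ix\cdot\xi}\, e^{-\epsilon^2|\xi|^2/2}\, \hat{g}_1(\xi)\, d'\xi.
\end{equation}
Two observations drive the proof. First, since $g \in \mathcal{S}$ one has $\hat{g}_1 \in \mathcal{S} \subset L^1$, so the integrand is dominated by the integrable function $|\hat{g}_1(\xi)|$ uniformly in $\epsilon$; dominated convergence then gives $I_\epsilon(x) \to \int_{\mathbb{R}^n} e^{ix\cdot\xi}\hat{g}_1(\xi)\, d'\xi$, which is exactly the right-hand side we are after. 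Second, inserting the definition of $\hat{g}_1$, the integrand of the resulting double integral has modulus $e^{-\epsilon^2|\xi|^2/2}|g(y)|$, which \emph{is} integrable over the product space, so Fubini applies and
\begin{equation}
    I_\epsilon(x) = \int_{\mathbb{R}^n} g(y) \left( \int_{\mathbb{R}^n} e^{i(x-y)\cdot\xi}\, e^{-\epsilon^2|\xi|^2/2}\, d'\xi \right) d'y.
\end{equation}

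The inner integral is the Fourier transform of a Gaussian, the one explicit computation the argument relies on; evaluating it (by completing the square and using the standard Gaussian integral, or the ODE method employed later for the Gaussian) yields $\epsilon^{-n} e^{-|x-y|^2/(2\epsilon^2)}$. Setting $\psi_\epsilon(z) := (2\pi)^{-n/2}\epsilon^{-n} e^{-|z|^2/(2\epsilon^2)}$ and absorbing the $(2\pi)^{-n/2}$ from $d'y$, one recognizes $I_\epsilon = g * \psi_\epsilon$, where $\{\psi_\epsilon\}$ is a Gaussian approximate identity with $\int_{\mathbb{R}^n}\psi_\epsilon = 1$. Since $g$ is continuous and bounded, a standard approximate-identity estimate gives $(g*\psi_\epsilon)(x) \to g(x)$ as $\epsilon \to 0$. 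Equating the two limits of $I_\epsilon(x)$ then yields the claim.

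The main obstacle is not conceptual but lies in the two limit interchanges and the Gaussian evaluation. The Gaussian Fourier transform must be available independently of the inversion formula to avoid circularity, so I would either compute the required Gaussian integral inline here or defer to the self-contained ODE computation of Section~\ref{sec32}. The Schwartz decay of $g$ and $\hat{g}_1$ is precisely what makes both the dominated-convergence step and the approximate-identity step clean, so I would record the uniform domination by $|\hat{g}_1|$ and the normalization $\int_{\mathbb{R}^n} \psi_\epsilon = 1$ explicitly.
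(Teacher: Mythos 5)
The paper never proves this theorem: it is quoted as standard background and used as a black box (Corollary~\ref{homeo} and the computation of $\hat{1}$ both lean on it), so there is no internal argument to compare yours against --- your proposal fills a gap the paper leaves open. Your Gaussian-regularization argument is the classical proof and it is correct: the diagnosis that the naive Fubini swap fails (the integrand has modulus $|g(y)|$, not jointly integrable), the insertion of the damping factor $e^{-\epsilon^2 |\xi|^2/2}$, the dominated-convergence limit giving the right-hand side, the legitimate Fubini interchange identifying $I_\epsilon = g * \psi_\epsilon$ with $\int_{\mathbb{R}^n} \psi_\epsilon = 1$, and the approximate-identity limit $g * \psi_\epsilon \to g$ for bounded uniformly continuous $g$ form exactly the standard chain, and your bookkeeping of the $(2\pi)^{-n/2}$ factors is right. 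Two fine points should be made explicit to keep the argument airtight. First, your domination step invokes $\hat{g}_1 \in \mathcal{S} \subset L^1$; within the paper's ordering, the full statement that $\mathcal{F}_1$ is a homeomorphism of $\mathcal{S}$ onto itself is presented as a \emph{consequence} of inversion, so you should appeal only to the ``into'' half --- that $\mathcal{F}_1(\mathcal{S}) \subset \mathcal{S}$, or even merely $\hat{g}_1 \in L^1$, both of which follow from differentiation under the integral and integration by parts with no inversion needed --- to avoid any appearance of circularity. Second, as you yourself note, the Gaussian evaluation must be independent of inversion; computing it inline by completing the square, or adapting the ODE argument of Section~\ref{sec32} (which is indeed inversion-free), settles that. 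With those two caveats recorded, your proof is complete and would serve as a sound proof for the paper to include.
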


\begin{corollary}\label{homeo}
    The Fourier transform $\mathcal{F}_1$ is a homeomorphism of $\mathcal{S}$ onto itself.
\end{corollary}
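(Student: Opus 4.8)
The plan is to establish the three defining properties of a linear homeomorphism in turn: that $\mathcal{F}_1$ maps $\mathcal{S}$ continuously into $\mathcal{S}$, that it is a bijection, and that its inverse is again continuous. The engine driving everything is the pair of intertwining identities exchanging differentiation and multiplication by coordinates, namely
\begin{equation}
    \mathcal{F}_1(\partial^\alpha f)(\xi) = (i\xi)^\alpha \hat{f}_1(\xi), \qquad \partial^\alpha \hat{f}_1(\xi) = \mathcal{F}_1\big((-ix)^\alpha f\big)(\xi),
\end{equation}
valid for $f \in \mathcal{S}$. The first follows from integration by parts, with the boundary terms vanishing by rapid decay; the second follows from differentiation under the integral sign, justified because $x^\alpha f \in L^1$ for every multi-index $\alpha$.

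First I would use these identities to show $\mathcal{F}_1(\mathcal{S}) \subseteq \mathcal{S}$ together with the seminorm estimate needed for continuity. For multi-indices $\alpha, \beta$, combining both identities gives that $\xi^\alpha \partial^\beta \hat{f}_1(\xi)$ equals a constant of modulus one times $\mathcal{F}_1\big(\partial^\alpha(x^\beta f)\big)(\xi)$, and since $\partial^\alpha(x^\beta f) \in \mathcal{S} \subset L^1$, the right-hand side is bounded in absolute value by $(2\pi)^{-n/2}\Vert \partial^\alpha(x^\beta f)\Vert_{L^1}$. Controlling this $L^1$ norm by a Schwartz seminorm of $f$ — by extracting an integrable weight such as $(1+\vert x\vert)^{-n-1}$ from the integrand — yields, for each $N$, a bound of the form $q_N(\hat{f}_1) \le C\, q_M(f)$ with $M$ depending on $N$ and $n$. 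This simultaneously proves $\hat{f}_1 \in \mathcal{S}$ and, by the characterization of continuity for linear maps into the Fr\'echet space $\mathcal{S}$, the continuity of $\mathcal{F}_1$.

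Next I would settle bijectivity and continuity of the inverse using the Fourier inversion formula. The key observation is that the inversion integral is nothing but $\mathcal{F}_1$ followed by the reflection $R\phi(x) := \phi(-x)$; explicitly $\mathcal{F}_1^{-1}g(x) = \hat{g}_1(-x)$. The inversion formula then reads $R \circ \mathcal{F}_1 \circ \mathcal{F}_1 = \mathrm{id}_{\mathcal{S}}$, and since a direct change of variables shows $R$ commutes with $\mathcal{F}_1$ while $R^2 = \mathrm{id}$, one obtains both $\mathcal{F}_1^{-1}\mathcal{F}_1 = \mathrm{id}$ and $\mathcal{F}_1\mathcal{F}_1^{-1} = \mathrm{id}$ on $\mathcal{S}$, so that $\mathcal{F}_1^{-1} = R \circ \mathcal{F}_1$. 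Because $R$ is a topological isomorphism of $\mathcal{S}$ (it preserves every seminorm $q_N$ exactly) and $\mathcal{F}_1$ was just shown continuous, the inverse $\mathcal{F}_1^{-1} = R \circ \mathcal{F}_1$ is continuous as well, completing the argument.

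I expect the main obstacle to be the bookkeeping in the first step: justifying the interchange of derivative and integral, verifying that the boundary terms in the integration by parts vanish, and — most delicately — packaging the resulting $L^1$ bounds so that each target seminorm $q_N(\hat{f}_1)$ is dominated by a \emph{single} source seminorm $q_M(f)$ rather than an unwieldy finite combination. Everything after that is essentially formal once the inversion formula is granted.
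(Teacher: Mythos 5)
Your proof is correct and complete; it is the standard argument (essentially Rudin's). One thing worth noting in comparison: the paper never actually proves this corollary --- it is stated immediately after the inversion formula with no proof, implicitly deferring to the cited literature. Your write-up supplies exactly the two ingredients such a deduction needs and that the paper leaves unstated: first, that $\mathcal{F}_1$ maps $\mathcal{S}$ into $\mathcal{S}$ \emph{continuously}, via the intertwining identities and the estimate $q_N(\hat{f}_1) \le C\, q_M(f)$ --- without this, the inversion formula cannot even be iterated inside $\mathcal{S}$; and second, that the inversion formula can be recast as $R \circ \mathcal{F}_1 \circ \mathcal{F}_1 = \mathrm{id}$ with $R$ the reflection, which hands you bijectivity and continuity of the inverse simultaneously, since $R$ preserves every seminorm $q_N$ exactly. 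Two small presentational points. (i) You invoke $\mathcal{F}_1^{-1}$ in the course of proving it exists; cleaner is to define $G := R \circ \mathcal{F}_1$ outright, verify $G \circ \mathcal{F}_1 = \mathrm{id}$ (the inversion formula) and $\mathcal{F}_1 \circ G = \mathrm{id}$ (using the commutation $R\mathcal{F}_1 = \mathcal{F}_1 R$ and $R^2 = \mathrm{id}$), and only then conclude that $\mathcal{F}_1$ is a bijection with $\mathcal{F}_1^{-1} = G$. (ii) The ``packaging'' obstacle you flag at the end is harmless with the paper's definition of the seminorms: since $q_N \le q_{N+1}$, the family is increasing, hence directed, so any finite combination of seminorms arising from the Leibniz expansion of $\partial^\alpha(x^\beta f)$ is dominated by a constant times a single $q_M$.
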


We now use the dual pairing on $\mathcal{S}' \times \mathcal{S}$ to extend $\mathcal{F}_1$ to all of $\mathcal{S}'$.

\begin{proposition}
    For $u \in \mathcal{S}'$, define the linear map $\mathcal{F}_1u = \hat{u}: \mathcal{S} \to \mathbb{C}$ by
    \begin{equation}
        \langle \hat{u}, \phi \rangle := \langle u, \hat{\phi} \rangle \quad \text{for } \phi \in \mathcal{S}.
    \end{equation}
    Then, $\mathcal{F}_1u \in \mathcal{S}'$ and the map $\mathcal{F}_1: \mathcal{S}' \to \mathcal{S}'$ is a homeomorphism.
\end{proposition}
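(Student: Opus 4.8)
The plan is to verify the two assertions in turn: first that $\hat{u}$ is genuinely a tempered distribution, and then that the resulting map $\mathcal{F}_1$ on $\mathcal{S}'$ is a homeomorphism. Throughout I would take $\mathcal{S}'$ to carry the weak-$*$ topology, i.e.\ the topology of pointwise convergence on $\mathcal{S}$, which is the standard choice; the argument below is insensitive to this choice, since it only uses that $\mathcal{F}_1$ on $\mathcal{S}'$ is the transpose of a continuous self-map of $\mathcal{S}$.

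For the first assertion, linearity of $\hat{u}$ is immediate from the linearity of both $u$ and the map $\phi \mapsto \hat{\phi}$ on $\mathcal{S}$. To check continuity I would invoke Proposition~\ref{criterion}. Since $u \in \mathcal{S}'$, there are $N, C_N$ with $|\langle u, \psi \rangle| \le C_N q_N(\psi)$ for all $\psi \in \mathcal{S}$; applying this to $\psi = \hat{\phi}$ gives $|\langle \hat{u}, \phi \rangle| = |\langle u, \hat{\phi} \rangle| \le C_N q_N(\hat{\phi})$. The remaining task is to dominate $q_N(\hat{\phi})$ by a seminorm of $\phi$, and this is exactly what Corollary~\ref{homeo} supplies: continuity of $\mathcal{F}_1 : \mathcal{S} \to \mathcal{S}$ means that for the seminorm $q_N$ there exist $M$ and $C$ with $q_N(\hat{\phi}) \le C q_M(\phi)$ for every $\phi$. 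Chaining the two estimates yields $|\langle \hat{u}, \phi \rangle| \le C_N C\, q_M(\phi)$, so $\hat{u} \in \mathcal{S}'$ by Proposition~\ref{criterion}.

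For the homeomorphism claim, I would exhibit an explicit inverse. Corollary~\ref{homeo}, together with the inversion formula, furnishes a continuous inverse $\mathcal{F}_1^{-1}$ of the Fourier transform on $\mathcal{S}$; I then define $\mathcal{G} : \mathcal{S}' \to \mathcal{S}'$ by $\langle \mathcal{G} u, \phi \rangle := \langle u, \mathcal{F}_1^{-1}\phi \rangle$. The same estimate as above, now with $\mathcal{F}_1^{-1}$ in place of $\mathcal{F}_1$, shows $\mathcal{G} u \in \mathcal{S}'$. To see that $\mathcal{G}$ inverts $\mathcal{F}_1$, I would unwind the pairings: for every $\phi \in \mathcal{S}$,
\[
\langle \mathcal{G}(\hat{u}), \phi \rangle = \langle \hat{u}, \mathcal{F}_1^{-1}\phi \rangle = \langle u, \mathcal{F}_1\mathcal{F}_1^{-1}\phi \rangle = \langle u, \phi \rangle,
\]
and symmetrically $\langle \mathcal{F}_1(\mathcal{G} u), \phi \rangle = \langle u, \mathcal{F}_1^{-1}\mathcal{F}_1\phi \rangle = \langle u, \phi \rangle$, each step using that $\mathcal{F}_1$ and $\mathcal{F}_1^{-1}$ are mutually inverse on $\mathcal{S}$. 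Hence $\mathcal{G} = \mathcal{F}_1^{-1}$ as maps on $\mathcal{S}'$ and $\mathcal{F}_1$ is a bijection.

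Finally, continuity of both $\mathcal{F}_1$ and $\mathcal{G}$ for the weak-$*$ topology is essentially automatic: if $u_\alpha \to u$ pointwise on $\mathcal{S}$, then $\langle \hat{u}_\alpha, \phi \rangle = \langle u_\alpha, \hat{\phi} \rangle \to \langle u, \hat{\phi} \rangle = \langle \hat{u}, \phi \rangle$ for each fixed $\phi$, so $\hat{u}_\alpha \to \hat{u}$, and the identical argument applies to $\mathcal{G}$. I expect the only real subtlety---rather than any genuine obstacle---to be bookkeeping: one must ensure that the ``inverse Fourier transform'' appearing in the definition of $\mathcal{G}$ is the actual inverse of $\mathcal{F}_1$ on $\mathcal{S}$ guaranteed by Corollary~\ref{homeo}, and not merely the formal integral of the inversion theorem, so that the cancellations $\mathcal{F}_1\mathcal{F}_1^{-1} = \mathcal{F}_1^{-1}\mathcal{F}_1 = \mathrm{id}_{\mathcal{S}}$ are legitimate. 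Fixing the topology on $\mathcal{S}'$ at the outset is also worthwhile, since the word ``homeomorphism'' is otherwise ambiguous.
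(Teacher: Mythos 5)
Your proof is correct, but there is nothing in the paper to compare it with: the paper states this proposition \emph{without proof}, treating it as a standard fact from functional analysis, and moves straight on to the discussion of conventions. What you have written is the standard transpose/duality argument that fills this gap. All three ingredients are sound: (i) membership $\hat u \in \mathcal{S}'$ follows by chaining the estimate of Proposition~\ref{criterion} for $u$ with a seminorm bound $q_N(\hat\phi)\le C\,q_M(\phi)$ expressing continuity of $\mathcal{F}_1$ on $\mathcal{S}$ (Corollary~\ref{homeo}); (ii) the transpose of $\mathcal{F}_1^{-1}$, which Corollary~\ref{homeo} guarantees exists and is continuous on $\mathcal{S}$, is a two-sided inverse of $\mathcal{F}_1$ on $\mathcal{S}'$, by exactly the unwinding of pairings you perform; (iii) weak-$*$ continuity of a transpose map is automatic, and your net argument establishes it. Two small points are worth flagging. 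First, the seminorm-domination characterization of continuity you invoke is stated in the paper only for scalar-valued functionals (Proposition~\ref{criterion}); the analogous statement for the map $\mathcal{F}_1:\mathcal{S}\to\mathcal{S}$ is true but deserves its one-line justification, namely that the family $\{q_N\}$ is increasing in $N$ (hence directed), so every continuous seminorm on $\mathcal{S}$ is dominated by a multiple of some $q_M$. Second, your insistence on fixing the topology on $\mathcal{S}'$ at the outset is apt, since the paper never specifies one: under the weak-$*$ topology your argument is complete as written, while under the strong topology the bijectivity argument is unchanged but continuity would instead rest on the standard fact that the transpose of a continuous map is strongly continuous (using that $\mathcal{F}_1$ carries bounded subsets of $\mathcal{S}$ to bounded subsets).
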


\subsection{Different conventions of the Fourier transform}

In the engineering realm, the Fourier transform is defined differently as follows. To avoid confusion, we will denote the Fourier transform defined here as $\mathcal{F}_2$ or $\hat{\cdot}_2$. 

\begin{definition}[Fourier transform II]\label{F2}
    Let $f \in L^1$. For $\xi \in \mathbb{R}^n$, define the Fourier transform of $f$, denoted $\mathcal{F}_2f$ or $\hat{f}_2$, by
    \begin{equation}
        \mathcal{F}_2f(\xi) = \hat{f}_2(\xi) := \int_{\mathbb{R}^n} e^{-2\pi ix \cdot \xi} f(x) \ dx.
    \end{equation}
\end{definition}

For the ease of comparison, we will recall the Fourier transform defined following the mathematical convention. Let $f \in L^1$. For $\xi \in \mathbb{R}^n$, the Fourier transform of $f$, defined using the mathematical convention, denoted $\mathcal{F}_2f$ or $\hat{f}_1$, is given by
\begin{equation}
    \mathcal{F}_1f(\xi) = \hat{f}_1(\xi) := \int_{\mathbb{R}^n} e^{-ix \cdot \xi} f(x) \ d'x,
\end{equation}
where $d'x := (2\pi)^{-n/2} \ dx$.

Similar to the mathematical convention, we can extend $\mathcal{F}_2$ from $\mathcal{S}$ to all of $\mathcal{S}'$ such that the Fourier transform of a distribution, e.g., the Dirac comb, can be computed. 

The mathematical and engineering conventions are reconciled by the following relationships:
\begin{align}
    \hat{f}_1(\xi) & = (2\pi)^{-n/2} \hat{f}_2\left(\frac{\xi}{2\pi}\right); \label{convert}\\
    \hat{f}_2(\xi) & = (2\pi)^{n/2} \hat{f}_1(2\pi\xi).    
\end{align}

Sometimes, the engineering convention gives a slightly more elegant expression. Hence, for the rest of the note, we will denote by $\mathcal{F}$ and $\hat{\cdot}$ the Fourier transform in the engineering convention. To obtain the expression for the mathematical convention, apply Equation~\ref{convert}.

\subsection{Properties of the Fourier transform}

Before moving to the next section, we will highlight some important properties of the Fourier transform on $\mathcal{S}$. For example, we have previously shown in Corollary~\ref{homeo} that the Fourier transform $\mathcal{F}_1:\mathcal{S} \to \mathcal{S}$ is linear, and so is $\mathcal{F}_2$.

The next few properties will turn out to be useful for some of the derivations in the next section. The proofs are immediate from Definitions~\ref{F1} and ~\ref{F2}, and are left as exercise.

\begin{proposition}\label{shifting}
    Let $\phi \in \mathcal{S}$, $x_0, \xi_0 \in \mathbb{R}^n$, and $\lambda > 0$. We have
    \begin{enumerate}
        \item (time shifting\footnote{Or spatial shifting, depending on the dimension $n$ and also what $x$ represents.}) if $h(x) = \phi(x - x_0)$, then 
        \begin{equation}\label{ts}
            \hat{h}_1(\xi) = e^{-i x_0 \cdot \xi} \hat{\phi}_1(\xi) \quad \text{and} \quad \hat{h}_2(\xi) = e^{-2\pi i x_0 \cdot \xi} \hat{\phi}_2(\xi);
        \end{equation}
        \item (frequency shifting) if $g(x) = e^{ix \cdot \xi_0} \phi(x)$, then 
        \begin{equation}
            \hat{g}_1(\xi) = \hat{\phi}_1(\xi - \xi_0);
        \end{equation}
        or if $g(x) = e^{2\pi i x \cdot \xi_0} \phi(x)$, then
        \begin{equation}\label{fs}
            \hat{g}_2(\xi) = \hat{\phi}_2(\xi - \xi_0);
        \end{equation}
        \item (time scaling\footnote{Or spatial scaling, depending on the dimension $n$ and also what $x$ represents. It is noteworthy that this scaling is \emph{isotropic} for all dimensions.}) if $p(x) = \phi(\lambda x)$, then
        \begin{equation}
            \hat{p}(\xi) = \frac{1}{\lambda^n}\hat{\phi} \left( \frac{\xi}{\lambda} \right).
        \end{equation}
    \end{enumerate}
\end{proposition}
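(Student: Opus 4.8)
The plan is to verify each of the three identities by direct substitution into the defining integrals of Definitions~\ref{F1} and~\ref{F2}, since every property here reduces to a simple change of variables or an algebraic rearrangement of the integrand. Because $\phi \in \mathcal{S} \subset L^1$, all the integrals involved converge absolutely, so the substitutions are fully justified and the two conventions can be treated in parallel, differing only by the placement of the factor $2\pi$ in the exponential kernel.

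For the time-shifting identity, I would start from $\hat{h}_1(\xi) = \int_{\mathbb{R}^n} e^{-ix\cdot\xi}\phi(x-x_0)\,d'x$ and perform the translation $y = x - x_0$. Since $d'x$ differs from Lebesgue measure only by the constant $(2\pi)^{-n/2}$ and translation preserves Lebesgue measure, the measure is unchanged; writing $x = y + x_0$ in the exponent and pulling the resulting constant factor $e^{-ix_0\cdot\xi}$ out of the integral leaves exactly $\hat{\phi}_1(\xi)$. The $\mathcal{F}_2$ version is identical with $\xi$ replaced by $2\pi\xi$ in the exponent, producing the factor $e^{-2\pi i x_0\cdot\xi}$.

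The frequency-shifting identity is even more immediate and requires no change of variables. Substituting $g(x) = e^{ix\cdot\xi_0}\phi(x)$ into the definition and combining the two exponentials yields the kernel $e^{-ix\cdot(\xi-\xi_0)}$, which is precisely the original transform evaluated at the shifted frequency $\xi - \xi_0$, giving $\hat{g}_1(\xi) = \hat{\phi}_1(\xi - \xi_0)$. The $\mathcal{F}_2$ analogue follows by the same merging of exponentials, now with the $2\pi$ factor present throughout.

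The only step demanding genuine care is the scaling identity, where I would substitute $y = \lambda x$ in $\hat{p}(\xi) = \int_{\mathbb{R}^n} e^{-2\pi i x\cdot\xi}\phi(\lambda x)\,dx$. In $n$ dimensions this dilation carries Jacobian $\lambda^n$, which is the source of the prefactor $\lambda^{-n}$, and the hypothesis $\lambda > 0$ is exactly what allows me to write this Jacobian without an absolute value. Rewriting $x\cdot\xi = (y/\lambda)\cdot\xi = y\cdot(\xi/\lambda)$ then identifies the remaining integral as $\hat{\phi}(\xi/\lambda)$, completing the argument. This $n$-dimensional Jacobian bookkeeping is the main—and essentially the only—obstacle in the proof.
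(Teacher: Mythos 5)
Your proposal is correct and is exactly the argument the paper intends: the paper states that these proofs ``are immediate from Definitions~\ref{F1} and~\ref{F2}'' and leaves them as an exercise, and your direct substitutions (translation for time shifting, merging exponentials for frequency shifting, and the dilation with Jacobian $\lambda^n$ for scaling) supply precisely those omitted computations. Your observation that $\lambda > 0$ is what lets the Jacobian appear without an absolute value is a correct and worthwhile detail to record.
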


One other property, which deserves to be mentioned separately, is the following. The proofs are also quite trivial and are left as exercise. 

\begin{proposition}[Conjugation]\label{conjugation}
    Let $\phi \in \mathcal{S}$. If $f(x) = \overline{\phi(x)}$, then
    \begin{equation}
        \hat{f}(\xi) = \overline{\hat{\phi}(-\xi)}.
    \end{equation}
    In particular, if $f$ is real, then
    \begin{equation}\label{pf}
        \hat{f}(-\xi) = \overline{\hat{f}(\xi)};
    \end{equation}
    if $f$ is imaginary, then
    \begin{equation}
        \hat{f}(-\xi) = -\overline{\hat{f}(\xi)}.
    \end{equation}
\end{proposition}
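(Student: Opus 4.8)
The plan is to prove the main identity $\hat{f}(\xi) = \overline{\hat{\phi}(-\xi)}$ by a direct computation straight from Definition~\ref{F2}, exploiting the fact that every integral in sight converges absolutely because $\phi \in \mathcal{S} \subset L^1$. First I would expand the left-hand side for $f = \overline{\phi}$:
\[
    \hat{f}(\xi) = \int_{\mathbb{R}^n} e^{-2\pi i x \cdot \xi}\, \overline{\phi(x)} \ dx.
\]
Separately, I would unfold $\overline{\hat{\phi}(-\xi)}$ from the definition, pushing the complex conjugation inside the integral (justified by writing the integrand as its real and imaginary parts, each integrable, so that conjugation and integration commute):
\[
    \overline{\hat{\phi}(-\xi)} = \overline{\int_{\mathbb{R}^n} e^{2\pi i x \cdot \xi}\, \phi(x) \ dx} = \int_{\mathbb{R}^n} \overline{e^{2\pi i x \cdot \xi}}\, \overline{\phi(x)} \ dx.
\]
The one algebraic observation that closes the argument is $\overline{e^{2\pi i x \cdot \xi}} = e^{-2\pi i x \cdot \xi}$, after which the two displayed integrals agree integrand-by-integrand, yielding $\hat{f}(\xi) = \overline{\hat{\phi}(-\xi)}$. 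It is worth emphasizing that \emph{no} change of variables is needed: the reflection $\xi \mapsto -\xi$ in the conclusion is generated entirely by conjugating the exponential kernel, not by substituting in the integration variable.

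For the two \emph{in particular} clauses I would simply specialize the identity just proved to the case $\phi = f$ and read off the symmetry from the hypothesis on $f$. If $f$ is real, then $\overline{f} = f$, so the main identity (applied to the function $x \mapsto \overline{f(x)} = f(x)$, whose transform is $\hat{f}$) gives $\hat{f}(\xi) = \overline{\hat{f}(-\xi)}$; replacing $\xi$ by $-\xi$ produces Equation~\eqref{pf}, namely $\hat{f}(-\xi) = \overline{\hat{f}(\xi)}$. If instead $f$ is imaginary, then $\overline{f} = -f$, and since the transform is linear the same specialization gives $-\hat{f}(\xi) = \overline{\hat{f}(-\xi)}$; the substitution $\xi \mapsto -\xi$ then yields $\hat{f}(-\xi) = -\overline{\hat{f}(\xi)}$.

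Honestly, there is no genuinely hard step here: the statement is a one-line consequence of the definition once conjugation is moved through the integral. The only point that deserves a word of care is precisely that interchange, which is immediate from the absolute convergence guaranteed by $\phi \in \mathcal{S}$ together with the $\mathbb{R}$-linearity of the Lebesgue integral applied to real and imaginary parts. I would therefore state that justification explicitly in one sentence and otherwise let the computation speak for itself.
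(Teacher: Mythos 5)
Your proof is correct, and it is exactly the argument the paper intends: the paper leaves this proposition as an exercise, calling it ``quite trivial,'' and your direct computation from Definition~\ref{F2} --- pulling the conjugation through the absolutely convergent integral so that $\overline{e^{2\pi i x \cdot \xi}} = e^{-2\pi i x \cdot \xi}$ produces the reflection --- is the one-line derivation being alluded to. Your handling of the two special cases by specializing $\phi = f$ and using $\overline{f} = f$ (real) or $\overline{f} = -f$ (imaginary) together with linearity is also the standard and correct way to read off those symmetries.
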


\begin{remark}
    Equation~(\ref{pf}) is the theoretical basis behind the partial Fourier techniques in MRI. Consider a perfect scenario where no phase errors occur during data collection, i.e., where the image $f$ is real. The corresponding raw data that we collect in $k$-space display conjugate symmetry. Therefore, we could essentially generate the same image by covering as little as only half of $k$-space and filling in the rest by conjugate symmetry. However, phase error occurs during acquisition and conjugate symmetry is almost always violated. One of the most significant sources of these phase errors is $B_0$ inhomogeneity. Therefore, prediction of missing data is usually performed \emph{after} phase correction, which is typically done by acquiring slightly more than half, e.g., $5/8$, of the lines in $k$-space to generate phase correction maps of $k$-space.
\end{remark}

As a corollary to Proposition~\ref{conjugation}, we have the following property about the Fourier transform of the real and imaginary parts of a function. The proofs are immediate from Proposition~\ref{conjugation} and are left as exercise.

\begin{corollary}[Real and imaginary parts]\label{re_im}
    Let $\phi \in \mathcal{S}$. If $f(x) = \Re [\phi(x)]$, then
    \begin{equation}
        \hat{f}(\xi) = \frac{1}{2}\left(\hat{f}(\xi) + \overline{\hat{f}(-\xi)}\right);
    \end{equation}
    if $f(x) = \Im[\phi(x)]$, then
    \begin{equation}
        \hat{f}(\xi) = \frac{1}{2i}\left(\hat{f}(\xi) - \overline{\hat{f}(-\xi)}\right).
    \end{equation}
\end{corollary}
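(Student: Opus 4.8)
The plan is to reduce both identities to the conjugation rule of Proposition~\ref{conjugation} by writing the real and imaginary parts of $\phi$ as linear combinations of $\phi$ and $\overline{\phi}$ and then invoking the linearity of $\mathcal{F}$ (which the paper notes follows for both conventions from Corollary~\ref{homeo}). The first ingredient is the pair of pointwise algebraic identities
\begin{equation}
    \Re[\phi(x)] = \tfrac{1}{2}\left(\phi(x) + \overline{\phi(x)}\right), \qquad \Im[\phi(x)] = \tfrac{1}{2i}\left(\phi(x) - \overline{\phi(x)}\right).
\end{equation}
Since differentiation commutes with conjugation and $\vert \partial^\alpha \overline{\phi}\vert = \vert \partial^\alpha \phi\vert$, the space $\mathcal{S}$ is closed under complex conjugation, so $\overline{\phi} \in \mathcal{S}$ and every term above again lies in $\mathcal{S}$, guaranteeing that its Fourier transform is defined.

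Consider first $f = \Re[\phi]$. Applying $\mathcal{F}$ and using linearity yields $\hat{f}(\xi) = \tfrac{1}{2}\big(\hat{\phi}(\xi) + \widehat{\overline{\phi}}(\xi)\big)$, and Proposition~\ref{conjugation} applied to $\overline{\phi}$ supplies $\widehat{\overline{\phi}}(\xi) = \overline{\hat{\phi}(-\xi)}$, whence
\begin{equation}
    \hat{f}(\xi) = \tfrac{1}{2}\left(\hat{\phi}(\xi) + \overline{\hat{\phi}(-\xi)}\right).
\end{equation}
The case $f = \Im[\phi]$ is handled identically: linearity gives $\hat{f}(\xi) = \tfrac{1}{2i}\big(\hat{\phi}(\xi) - \widehat{\overline{\phi}}(\xi)\big)$, and the same substitution produces the second formula. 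I would note that the right-hand sides are most naturally expressed through $\hat{\phi}$; since $f$ is real in each case, Proposition~\ref{conjugation} also gives $\overline{\hat{f}(-\xi)} = \hat{f}(\xi)$, which reconciles the computation with the exact form appearing in the statement.

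I anticipate no genuine obstacle, as the argument is purely algebraic once Proposition~\ref{conjugation} is in hand. The single point that demands care is applying the conjugation rule to the correct argument: the reflection $\xi \mapsto -\xi$ inside $\hat{\phi}$ is precisely what Proposition~\ref{conjugation} contributes, and omitting it would break the conjugate symmetry that the corollary is meant to capture.
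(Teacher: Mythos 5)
Your core argument is correct and is exactly the route the paper intends: the paper offers no written proof (it declares the result ``immediate from Proposition~\ref{conjugation}'' and leaves it as an exercise), and the expected argument is precisely your decomposition $\Re[\phi] = \frac{1}{2}(\phi + \overline{\phi})$ and $\Im[\phi] = \frac{1}{2i}(\phi - \overline{\phi})$, followed by linearity of $\mathcal{F}$ and the conjugation rule $\widehat{\overline{\phi}}(\xi) = \overline{\hat{\phi}(-\xi)}$. Your check that $\mathcal{S}$ is closed under complex conjugation is a worthwhile detail the paper does not spell out.

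The one genuine problem is your closing ``reconciliation'' remark. What your computation yields is $\hat{f}(\xi) = \frac{1}{2}\bigl(\hat{\phi}(\xi) + \overline{\hat{\phi}(-\xi)}\bigr)$ and $\hat{f}(\xi) = \frac{1}{2i}\bigl(\hat{\phi}(\xi) - \overline{\hat{\phi}(-\xi)}\bigr)$, with $\hat{\phi}$ on the right-hand side; the corollary as printed has $\hat{f}$ there, and the two cannot be reconciled in the imaginary-part case. Indeed, $f = \Im[\phi]$ is real-valued, so Equation~(\ref{pf}) gives $\overline{\hat{f}(-\xi)} = \hat{f}(\xi)$, and the printed right-hand side then collapses to $\frac{1}{2i}\bigl(\hat{f}(\xi) - \hat{f}(\xi)\bigr) = 0$; the printed identity would thus assert $\hat{f} \equiv 0$, which is false whenever $\Im[\phi] \not\equiv 0$ (take $\phi(x) = i e^{-\pi x^2}$, so that $f(x) = e^{-\pi x^2}$ and $\hat{f}(\xi) = e^{-\pi \xi^2}$). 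For the real-part case the same substitution merely turns the printed formula into the tautology $\hat{f} = \frac{1}{2}(\hat{f} + \hat{f})$, which says nothing about $\phi$. The honest conclusion is that the printed statement contains a typo --- the $\hat{f}$ on the right-hand sides should read $\hat{\phi}$ --- and that your decomposition argument establishes the corrected statement; claiming agreement with the formula exactly as written is not tenable, and you should flag the error instead.
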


\section{Some distributions and their Fourier transforms}\label{sec3}
In this section, for the sake of simplicity, we will take the dimension $n = 1$. In addition, we write $\mathcal{S}(\mathbb{R}) =: \mathcal{S}$, $\mathcal{S}'(\mathbb{R}) =: \mathcal{S}'$, and $L^1(\mathbb{R}) =: L^1$. For reference, some important distributions along with their Fourier transforms (under both conventions) are included in Table~\ref{tab}.

\renewcommand{\arraystretch}{2}

\begin{table}[ht!]
    \centering
    \begin{tabular}{lll}
        \hline
        $f(x)$ & $\hat{f}_1(\xi)$ & $\hat{f}_2(\xi)$ \\[0.5em]
        \hline
        $\sqcap(x)$ & $\displaystyle\frac{1}{\sqrt{2\pi}}\;\sinc\left(\frac{\xi}{2\pi}\right)$ & $\sinc(\xi)$ \\[0.5em]
        $e^{-\pi x^2}$ & $\displaystyle\frac{1}{\sqrt{2\pi}}\;e^{-\tfrac{\xi^2}{4\pi}}$ & $e^{-\pi \xi^2}$ \\[0.5em]
        $1$ & $\sqrt{2\pi}\;\delta(\xi)$ & $\delta(\xi)$ \\[0.5em]
        $\delta(x)$ & $\displaystyle\frac{1}{\sqrt{2\pi}}$ & 1 \\[0.5em]
        $e^{2\pi i x\xi_0 }$ & $\sqrt{2\pi}\;\delta(\xi - 2\pi \xi_0)$ & $\delta(\xi - \xi_0)$ \\[0.5em]
        $\delta(x - x_0)$ & $\displaystyle\frac{1}{\sqrt{2\pi}}\;e^{-ix_0\xi}$ & $e^{-2\pi i x_0\xi}$ \\[0.5em]
        $\cos(2\pi x \xi_0)$ & $\displaystyle\frac{\sqrt{2\pi}}{2}\left[\delta(\xi - 2\pi \xi_0) + \delta(\xi + 2\pi \xi_0)\right]$ & $\displaystyle\frac{1}{2}\left[\delta(\xi + \xi_0) + \delta(\xi - \xi_0)\right]$ \\[0.5em]
        $\sin(2\pi x \xi_0)$ & $\displaystyle\frac{\sqrt{2\pi}}{2i}\left[\delta(\xi - 2\pi \xi_0) - \delta(\xi + 2\pi \xi_0)\right]$ & $\displaystyle\frac{1}{2i}\left[\delta(\xi + \xi_0) - \delta(\xi - \xi_0)\right]$ \\[0.5em]
        $\Sh(x)$ & $\displaystyle\frac{1}{\sqrt{2\pi}}\;\Sh\left(\frac{\xi}{2\pi}\right)$ & $\Sh(\xi)$ \\[0.5em]
        \hline
    \end{tabular}
    \caption{Some distributions and their Fourier transforms.}
    \label{tab}
\end{table}

\subsection{Rectangular function}\label{sec31}

\begin{definition}[Rectangular function]
    Define the rectangular function, denoted $\sqcap$, by
    \begin{equation}
        \sqcap(x) := 
        \begin{cases}
            1 & \text{if } \vert x \vert \le 1/2 \\
            0 & \text{otherwise}
        \end{cases}.
    \end{equation}
\end{definition}

Integrating $\sqcap$ over the real line shows that $\Vert \sqcap \Vert_{L^1} = 1$. Hence, we see that $\sqcap \in L^1$ and, by Corollary~\ref{L1}, that $\sqcap: \mathcal{S} \to \mathbb{C}$ is a tempered distribution.

\begin{definition}[Sinc function]
    Define the (normalized)\footnote{The unnormalized sinc function is defined for $x \neq 0$ by $\sinc(x) = \frac{\sin x}{x}$ and defined for $x = 0$ similar to the normalized version.} sinc function for $x \neq 0$ by
    \begin{equation}
        \sinc(x) := \frac{\sin(\pi x)}{\pi x}.
    \end{equation}
    Additionally, define $\sinc(0)$ to be the limit of $\sinc(x)$ as $x \to 0$, i.e.,
    \begin{equation}
        \sinc(0) := \lim_{x \to 0} \frac{\sin (\pi x)}{\pi x} = 1.
    \end{equation}
\end{definition}

\begin{proposition}
    The Fourier transform of the rectangular function $\sqcap \in \mathcal{S}'$, denoted $\hat{\sqcap} \in \mathcal{S}'$, satisfies $\hat{\sqcap} = \sinc$.
\end{proposition}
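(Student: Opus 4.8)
The plan is to compute $\hat{\sqcap}$ directly from the definition of the Fourier transform of a tempered distribution, transferring the transform onto the test function through the dual pairing. Since the asserted answer $\sinc$ must be read as an element of $\mathcal{S}'$, I would first record that $\sinc$ is indeed a tempered distribution. It is continuous and bounded with $\vert\sinc(x)\vert \le 1$, so for any $\phi \in \mathcal{S}$ we have $\vert\langle \sinc, \phi\rangle\vert \le \int_{\mathbb{R}} \vert\sinc(x)\vert\,\vert\phi(x)\vert\, dx \le \Vert \phi \Vert_{L^1}$, and since $\Vert \phi \Vert_{L^1} \le C q_N(\phi)$ for a suitable $N$ (e.g. $N = 2$, using $\vert\phi(x)\vert \le (1+\vert x\vert)^{-N} q_N(\phi)$), Proposition~\ref{criterion} gives $\sinc \in \mathcal{S}'$. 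This is the analogue of Corollary~\ref{L1} with boundedness replacing integrability; note that Corollary~\ref{L1} does \emph{not} apply verbatim here because $\sinc \notin L^1$.

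With both $\sqcap \in L^1 \subset \mathcal{S}'$ and $\sinc \in \mathcal{S}'$ in hand, I would fix an arbitrary $\phi \in \mathcal{S}$ and unwind the definition of the distributional transform:
\[ \langle \hat{\sqcap}, \phi \rangle = \langle \sqcap, \hat{\phi} \rangle = \int_{-1/2}^{1/2} \hat{\phi}(\xi)\, d\xi = \int_{-1/2}^{1/2} \int_{\mathbb{R}} e^{-2\pi i x \xi}\, \phi(x)\, dx\, d\xi, \]
where the second equality uses that $\sqcap$ is the indicator of $[-1/2, 1/2]$ and the third expands the engineering-convention transform (Definition~\ref{F2}).

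The central step is to exchange the order of integration. This is legitimate by Fubini's theorem, since the integrand has modulus $\vert\phi(x)\vert$ and $\int_{-1/2}^{1/2} \int_{\mathbb{R}} \vert\phi(x)\vert\, dx\, d\xi = \Vert \phi \Vert_{L^1} < \infty$ because Schwartz functions are integrable. After swapping I would evaluate the inner integral $\int_{-1/2}^{1/2} e^{-2\pi i x \xi}\, d\xi$, treating $x = 0$ (which gives $1$) and $x \neq 0$ separately; in the latter case it equals $\frac{e^{\pi i x} - e^{-\pi i x}}{2\pi i x} = \frac{\sin(\pi x)}{\pi x}$, so both cases combine into $\sinc(x)$ by the definition of $\sinc$ and its value at $0$. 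This produces $\langle \hat{\sqcap}, \phi \rangle = \int_{\mathbb{R}} \sinc(x)\,\phi(x)\, dx = \langle \sinc, \phi \rangle$, and since $\phi$ was arbitrary we conclude $\hat{\sqcap} = \sinc$ in $\mathcal{S}'$.

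I expect the only genuine subtleties to be the justification of the Fubini exchange and the preliminary verification that $\sinc$ defines a tempered distribution at all; the integral evaluation itself is routine. An alternative route would be to first establish the general consistency lemma that, for $f \in L^1$, the distributional Fourier transform of $f$ is represented by the classical bounded continuous function $\hat{f}_2$, and then simply compute the elementary integral $\int_{-1/2}^{1/2} e^{-2\pi i x \xi}\, dx$ directly. Since that lemma is proved by essentially the same Fubini argument, I would favour the self-contained dual-pairing computation above.
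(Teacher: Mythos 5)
Your proof is correct, but it is structured differently from the paper's. The paper takes precisely the ``alternative route'' you mention and set aside at the end: it notes that $\sqcap \in L^1$, tacitly identifies the distributional Fourier transform of an $L^1$ function with the classical absolutely convergent integral of Definition~\ref{F2}, and then evaluates $\int_{-1/2}^{1/2} e^{-2\pi i x \xi}\, dx = \frac{\sin(\pi\xi)}{\pi\xi}$ in two lines. Your argument instead stays entirely inside $\mathcal{S}'$: you pair $\hat{\sqcap}$ against a test function, exchange the order of integration by Fubini, and recover $\sinc$ as the density representing the resulting functional --- in effect proving, for this particular $f$, the very $L^1$-versus-distributional consistency that the paper takes for granted. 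Your version also supplies two things the paper silently omits: first, since $\sinc \notin L^1$, Corollary~\ref{L1} does not show that the right-hand side of $\hat{\sqcap} = \sinc$ is a tempered distribution, and your boundedness estimate with the $(1+\vert x\vert)^{-2}$ trick (the same device as in Proposition~\ref{const}) fills that gap; second, you treat the removable singularity at the origin explicitly, whereas the paper's antiderivative computation is literally valid only for $\xi \neq 0$. The trade-off is concision versus self-containedness: the paper's computation is the short one a practitioner would reproduce, while yours makes every identification explicit at the level of the dual pairing; both ultimately hinge on the same elementary integral, evaluated in your case with the roles of $x$ and $\xi$ interchanged.
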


\begin{proof}
    Since $\sqcap \in L^1$, we prove by direct computation, i.e.,
    \begin{align}
        \hat{\sqcap}(\xi) = \int_{-\infty}^\infty \sqcap(x) e^{-2\pi ix\xi} \ dx = \int_{-1/2}^{1/2} e^{-2\pi ix\xi} \ dx & = \left[ \frac{e^{-2\pi ix\xi}}{-2\pi i\xi} \right]_{x = -1/2}^{x = 1/2} 
        \\ & = \frac{-2i\sin(\pi \xi)}{-2\pi i \xi} 
        \\ & = \frac{\sin (\pi \xi)}{\pi \xi} = \sinc(\xi).
    \end{align}
\end{proof}

\subsection{Gaussian function}\label{sec32}

We consider a particular example of Gaussian function of the form 
\begin{equation}
    g(x) = e^{-\pi x^2}.
\end{equation}
Integrating $g$ over the real line shows that $\Vert g \Vert_{L^1} = 1$. Hence, we see that $g \in L^1$ and, by Corollary~\ref{L1}, that $g: \mathcal{S} \to \mathbb{C}$ is a tempered distribution.

\begin{proposition}
    The Fourier transform of 
    \begin{equation}\label{g}
        g(x) = e^{-\pi x^2}
    \end{equation}
    is given by
    \begin{equation}
        \hat{g}(\xi) = e^{-\pi \xi^2}.
    \end{equation}
\end{proposition}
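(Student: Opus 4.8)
The plan is to avoid computing the defining integral directly and instead exploit the defining feature of the Gaussian: that it is, up to a constant multiple, its own derivative. This lets me convert the Fourier integral into a first-order linear ODE in the frequency variable. Concretely, I would set $G(\xi) := \hat{g}(\xi) = \int_{-\infty}^\infty e^{-\pi x^2} e^{-2\pi i x\xi}\,dx$ and differentiate with respect to $\xi$. Differentiating under the integral sign produces
\begin{equation}
    G'(\xi) = \int_{-\infty}^\infty (-2\pi i x)\, e^{-\pi x^2}\, e^{-2\pi i x\xi}\,dx.
\end{equation}
The key algebraic observation is that $\frac{d}{dx}e^{-\pi x^2} = -2\pi x\, e^{-\pi x^2}$, so the factor $-2\pi i x\, e^{-\pi x^2}$ equals $i\,\frac{d}{dx}e^{-\pi x^2}$, which is exactly the structure that makes integration by parts effective.

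Next I would integrate by parts, moving the derivative off the Gaussian. The boundary term carries the factor $e^{-\pi x^2}$, which decays to $0$ as $x \to \pm\infty$, so it vanishes, and differentiating the remaining exponential $e^{-2\pi i x\xi}$ in $x$ brings down a factor $-2\pi i \xi$. After simplification this yields the relation $G'(\xi) = -2\pi \xi\, G(\xi)$. This is a separable linear ODE whose general solution is $G(\xi) = A\, e^{-\pi \xi^2}$. To pin down the constant $A$, I would evaluate at $\xi = 0$, giving $G(0) = \int_{-\infty}^\infty e^{-\pi x^2}\,dx$, which is precisely the Gaussian integral already noted in the text to equal $\Vert g \Vert_{L^1} = 1$. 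Hence $A = 1$ and $G(\xi) = e^{-\pi \xi^2}$, as claimed.

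The main obstacle is not the algebra but justifying the differentiation under the integral sign, which is where the real analytic content lives. I would discharge this by the standard Leibniz rule for parameter-dependent integrals: the $\xi$-partial of the integrand, namely $(-2\pi i x)\,e^{-\pi x^2}\,e^{-2\pi i x\xi}$, is dominated in absolute value by $2\pi\lvert x\rvert\, e^{-\pi x^2}$, a function that is independent of $\xi$ and integrable on $\mathbb{R}$. This uniform integrable bound licenses differentiation under the integral for every $\xi$ and simultaneously guarantees that $G$ is $C^1$, so the ODE holds pointwise. A secondary point worth flagging is that the integration-by-parts boundary term genuinely vanishes, which again follows from the rapid decay of $e^{-\pi x^2}$; once these two analytic justifications are in place, the remaining ODE solution and the initial-condition computation are routine.
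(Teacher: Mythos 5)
Your proposal is correct and is essentially the same ODE argument as the paper's proof: both differentiate under the integral sign (justified by the same dominating function $2\pi\vert x\vert e^{-\pi x^2}$), integrate by parts to obtain $\hat{g}'(\xi) = -2\pi\xi\,\hat{g}(\xi)$, and use $\hat{g}(0) = 1$. The only cosmetic difference is that you solve the separable ODE explicitly and fix the constant, whereas the paper observes that $g$ and $\hat{g}$ satisfy the same initial value problem and invokes the uniqueness theorem.
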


\begin{proof}
    We show that $g$ and $\hat{g}$ satisfy the same initial value problem for which the uniqueness theorem applies, and $g = \hat{g}$ follows immediately.
    \begin{enumerate}
        \item ($g$). Differentiating both sides of (\ref{g}), we see that $g$ is a solution of the initial value problem given by
        \begin{equation}\label{ivp}
            f'(x) = -2\pi x f(x), f(0) = 1.
        \end{equation}
        \item ($\hat{g}$). The Fourier transform of $g$ is given by
        \begin{equation}
            \hat{g}(\xi) = \int_{-\infty}^\infty g(x) e^{-2\pi ix\xi} \ dx = \int_{-\infty}^\infty e^{-\pi x^2} e^{-2\pi ix\xi} \ dx.
        \end{equation}
        Differentiating the integrand, we see that
        \begin{equation}
            \left \vert \frac{d}{d\xi} [e^{-\pi x^2} e^{-2\pi ix\xi}] \right \vert = \vert e^{-\pi x^2} (-2\pi ix) e^{-2\pi ix\xi} \vert = 2\pi \vert x \vert e^{-\pi x^2},
        \end{equation}
         which is Lebesgue integrable over $\mathbb{R}$. Hence, the dominated convergence theorem applies and we may differentiate under the integral sign, in which case, we have
         \begin{equation}
             \hat{g}'(\xi) = \int_{-\infty}^\infty e^{-\pi x^2} (-2\pi ix) e^{-2\pi ix\xi} \ dx = i \int_{-\infty}^\infty (-2\pi x)e^{-\pi x^2} e^{-2\pi ix\xi} \ dx.
         \end{equation}
         Integrating by parts and noticing that the boundary term vanishes, we get 
         \begin{align}
             \hat{g}'(\xi) & = -i \int_{-\infty}^\infty e^{-\pi x^2} (-2\pi i\xi) e^{-2\pi ix\xi} \ dx \\
             & = -2\pi\xi i \int_{-\infty}^\infty e^{-\pi x^2} e^{-2\pi ix\xi} \ dx \\
             & = -2\pi\xi \hat{g}(\xi).
         \end{align}
         Finally, notice that 
         \begin{equation}
             \hat{g}(0) = \int_{-\infty}^\infty g(x) \ dx = 1.
         \end{equation}
         Hence, $\hat{g}$ is also a solution of the initial value problem in (\ref{ivp}).
    \end{enumerate}
\end{proof}

\begin{remark}
    There are many ways\footnote{\href{https://math.stackexchange.com/questions/270566/how-to-calculate-the-fourier-transform-of-a-gaussian-function}{https://math.stackexchange.com/questions/270566/how-to-calculate-the-fourier-transform-of-a-gaussian-function}} of computing the Fourier transform of a Gaussian function. But the ODE proof given above is one of the most elementary proofs that I know of since it does not involve invoking theorems from complex analysis. In addition, the proof above uses some of the most important conclusions from real analysis and ODE theory, namely, the fundamental existence and uniqueness theorem, Lebesgue's dominated convergence theorem, and integration by parts.
\end{remark}

\subsection{Dirac delta and constant function}\label{sec33}

In this section, we will see that the Dirac delta $\delta$ and the constant function $1$ are related by Fourier transform. As you will notice before long, one direction ($\hat{\delta} = 1$) is quite trivial while the other direction ($\hat{1} = \delta$), despite still being quite trivial, requires some additional tricks. 

We start by defining the Dirac delta and checking both $1$ and $\delta$ are tempered distributions.

\begin{proposition}\label{const}
    The constant function $1: \mathcal{S} \to \mathbb{C}$ is a tempered distribution.
\end{proposition}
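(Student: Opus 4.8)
The plan is to verify the two defining conditions of a tempered distribution: linearity and continuity. The constant function acts on a test function by the pairing $\langle 1, \phi \rangle = \int_{\mathbb{R}} \phi(x) \, dx$, and linearity in $\phi$ is immediate from linearity of the integral, so the entire content lies in establishing continuity. The crucial observation — and the reason this proposition is stated separately from Corollary~\ref{L1} — is that the constant function is \emph{not} in $L^1(\mathbb{R})$, so Corollary~\ref{L1} does not apply directly. Instead I will extract the needed integrability from the rapid decay of $\phi$ encoded in the seminorms $q_N$.

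By Proposition~\ref{criterion}, it suffices to produce some $N$ and $C_N$ with $|\langle 1, \phi \rangle| \le C_N q_N(\phi)$ for every $\phi \in \mathcal{S}$. The key step is to insert the weight $(1+|x|)^N$ and use the bound $(1+|x|)^N |\phi(x)| \le q_N(\phi)$, which holds because the $\alpha = 0$ term appears in the supremum defining $q_N$. This gives
\begin{equation}
    |\langle 1, \phi \rangle| \le \int_{\mathbb{R}} |\phi(x)| \, dx \le q_N(\phi) \int_{\mathbb{R}} \frac{dx}{(1+|x|)^N}.
\end{equation}

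I would then take $N = 2$, which is the smallest choice making the remaining integral converge; a direct computation gives $\int_{\mathbb{R}} (1+|x|)^{-2} \, dx = 2$. Setting $C_N = 2$ completes the estimate and, via Proposition~\ref{criterion}, finishes the proof. There is no real obstacle here once the weight trick is in view; the only subtlety is recognizing that the integrability of $\phi$ against the constant function must be supplied by the decay of $\phi$ rather than by the function $1$ itself. This same weighting argument, with a possibly larger $N$, is exactly what will be needed later to handle other non-$L^1$ distributions.
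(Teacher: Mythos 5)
Your proof is correct and follows essentially the same route as the paper: both insert the weight $(1+|x|)^{2}$, bound $(1+|x|)^{2}|\phi(x)|$ by the seminorm, and use $\int_{\mathbb{R}}(1+|x|)^{-2}\,dx = 2$ to conclude via Proposition~\ref{criterion} with $N = 2$ and $C_N = 2$. Your added remark that the constant function is not in $L^1$, so Corollary~\ref{L1} cannot be invoked, is a nice clarification of why the weight trick is needed, but the argument itself is identical to the paper's.
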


\begin{proof}
    Fix $\phi \in \mathcal{S}$. It is clear that $1: \mathcal{S} \to \mathbb{C}$ is linear. Hence, we only need to check continuity. Notice that 
    \begin{align}
        \vert \langle 1, \phi \rangle \vert = \left \vert \int_{-\infty}^\infty \phi(x) \ dx \right \vert & \le \int_{-\infty}^\infty \vert \phi(x) \vert \ dx \\
        & = \int_{-\infty}^\infty \frac{1}{(1+|x|)^2}(1+|x|)^2 \vert \phi(x) \vert \ dx \\
        & = \left ( \int_{-\infty}^\infty \frac{1}{(1+|x|)^2} \ dx \right ) \sup_{x \in \mathbb{R}} [(1+|x|)^2 \vert \phi(x) \vert] \\
        & = 2 \sup_{x \in \mathbb{R}} [(1+|x|)^2 \vert \phi(x) \vert].
    \end{align}
    Take $N = 2$ and $C_N = 2$ in Proposition~\ref{criterion} to finish.
\end{proof}

Here, we define the Dirac delta, also known as the unit impulse.

\begin{definition}[Dirac delta]
    Define the Dirac delta, denoted $\delta: \mathcal{S} \to \mathbb{C}$, by
    \begin{equation}
        \langle \delta, \phi \rangle = \phi(0) \quad \text{for } \phi \in \mathcal{S}.
    \end{equation}
\end{definition}

\begin{proposition}
    The Dirac delta $\delta: \mathcal{S} \to \mathbb{C}$ is a tempered distribution.
\end{proposition}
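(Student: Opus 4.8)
The plan is to follow exactly the same two-step template used for $L^1$ functions in Corollary~\ref{L1} and for the constant function in Proposition~\ref{const}: first verify linearity, which is immediate, and then establish continuity by invoking the seminorm criterion of Proposition~\ref{criterion}. Since $\delta$ is defined by evaluation at the origin, both steps reduce to elementary observations about point evaluation.

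First I would check that $\delta$ is linear. For $\phi, \psi \in \mathcal{S}$ and scalars $a, b$, the definition gives $\langle \delta, a\phi + b\psi \rangle = (a\phi + b\psi)(0) = a\phi(0) + b\psi(0) = a\langle \delta, \phi\rangle + b\langle \delta, \psi\rangle$, so linearity holds trivially and the real content is continuity.

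For continuity, the key observation is that evaluation at a single point is controlled by the sup-norm, which in turn is dominated by any of the seminorms $q_N$. Concretely, for any $\phi \in \mathcal{S}$ and any $N$,
\begin{equation}
    |\langle \delta, \phi \rangle| = |\phi(0)| \le \sup_{x \in \mathbb{R}} |\phi(x)| \le \sup_{x \in \mathbb{R}} (1 + |x|)^N |\phi(x)| \le q_N(\phi),
\end{equation}
where the second inequality uses $(1+|x|)^N \ge 1$ and the last uses that $q_N$ is a supremum over $|\alpha| \le N$ which in particular includes the $\alpha = 0$ term. Taking $N = 1$ (matching the earlier proofs) and $C_N = 1$ in Proposition~\ref{criterion} then finishes the argument.

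There is no real obstacle here: unlike the forthcoming identity $\hat{1} = \delta$, which the text flags as requiring an extra trick, showing that $\delta$ itself is tempered is essentially immediate once one recognizes that the $\alpha = 0$ contribution to $q_N$ already bounds the sup-norm. The only point deserving a moment's care is the convention for $\mathbb{N}$: if $0 \notin \mathbb{N}$ in the paper's usage, one simply evaluates the weight at $x = 0$ to get $(1+0)^N|\phi(0)| = |\phi(0)| \le q_N(\phi)$ with $N = 1$, avoiding any reliance on $q_0$.
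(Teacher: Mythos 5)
Your proposal is correct and follows essentially the same argument as the paper: verify linearity trivially, then bound $|\langle \delta, \phi\rangle| = |\phi(0)|$ by $\sup_{x}|\phi(x)| \le q_N(\phi)$ and apply Proposition~\ref{criterion} with $N = 1$, $C_N = 1$. The only difference is that you spell out the intermediate inequalities (and the $\alpha = 0$, $(1+|x|)^N \ge 1$ observations) that the paper leaves implicit, which is fine.
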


\begin{proof}
    Fix $\phi \in \mathcal{S}$. It is clear that $f: \mathcal{S} \to \mathbb{C}$ is linear. Hence, we only need to check continuity. Notice that
    \begin{equation}
        \vert \langle \delta, \phi \rangle \vert = \vert \phi(0) \vert \le \sup_{x \in \mathbb{R}} \vert \phi(x) \vert.
    \end{equation}
    Take $N = 1$ and $C_N = 1$ in Proposition~\ref{criterion} to finish.
\end{proof}

We will tackle the less trivial direction first, i.e., $\hat{1} = \delta$.

\begin{proposition}
    The Fourier transform of the constant function $1$ is $\delta$.
\end{proposition}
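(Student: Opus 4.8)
The plan is to avoid computing $\hat{1}$ as a direct integral — the naive expression $\int_{-\infty}^\infty e^{-2\pi i x\xi}\,dx$ does not converge, which is exactly why the constant function must be treated distributionally rather than pointwise. Instead I would unwind the definition of the Fourier transform on $\mathcal{S}'$ and let the Fourier inversion formula carry the argument; this duality is the ``additional trick'' alluded to above.

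First, fix an arbitrary $\phi \in \mathcal{S}$ and apply the defining identity $\langle \hat{1}, \phi \rangle = \langle 1, \hat{\phi} \rangle$. Since the constant function $1$ acts on a Schwartz function by ordinary integration against Lebesgue measure (the pairing used in Proposition~\ref{const}), this gives $\langle \hat{1}, \phi \rangle = \int_{-\infty}^\infty \hat{\phi}(\xi)\,d\xi$, where $\hat{\phi}$ is the ordinary Fourier transform of $\phi$ in the engineering convention. Note that this integral is absolutely convergent with no delicacy required, because $\hat{\phi} \in \mathcal{S} \subset L^1$ by Corollary~\ref{homeo}; all the subtlety has been moved onto the test-function side of the pairing.

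The key step is to recognize the right-hand side as the inverse Fourier transform of $\hat{\phi}$ evaluated at the origin. In the engineering convention the inversion formula reads $\phi(x) = \int_{-\infty}^\infty e^{2\pi i x\xi}\,\hat{\phi}(\xi)\,d\xi$; setting $x = 0$ yields precisely $\int_{-\infty}^\infty \hat{\phi}(\xi)\,d\xi = \phi(0)$. Therefore $\langle \hat{1}, \phi \rangle = \phi(0) = \langle \delta, \phi \rangle$ for every $\phi \in \mathcal{S}$, and since $\phi$ was arbitrary we conclude $\hat{1} = \delta$ in $\mathcal{S}'$.

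The one point deserving care — and the main obstacle — is that the inversion formula was stated earlier only in the mathematical convention $\mathcal{F}_1$, whereas $\hat{\cdot}$ now denotes the engineering convention. I would either restate the inversion formula directly for $\mathcal{F}_2$, or, more economically, derive it from the mathematical version via the conversion relation~(\ref{convert}): substituting $\hat{g}_1(\xi) = (2\pi)^{-1/2}\hat{g}_2(\xi/2\pi)$ into $g(x) = \int_{-\infty}^\infty e^{ix\xi}\hat{g}_1(\xi)\,d'\xi$ and rescaling $\xi \mapsto 2\pi\xi$ reproduces $g(x) = \int_{-\infty}^\infty e^{2\pi i x\xi}\,\hat{g}_2(\xi)\,d\xi$. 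This conversion is the genuine content of the proof; everything else is formal manipulation of the dual pairing.
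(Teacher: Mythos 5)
Your proof is correct and takes essentially the same route as the paper's: both unwind the dual pairing to get $\langle \hat{1}, \phi \rangle = \int_{-\infty}^\infty \hat{\phi}(\xi)\,d\xi$ and then apply the Fourier inversion formula at $x = 0$ to recognize this as $\phi(0) = \langle \delta, \phi\rangle$. Your closing remark about transferring the inversion formula from the $\mathcal{F}_1$ convention to the $\mathcal{F}_2$ convention via Equation~(\ref{convert}) addresses a detail the paper silently glosses over, but it is a refinement of the same argument, not a different one.
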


\begin{proof}
    For $\phi \in \mathcal{S}$, we have
    \begin{equation}
        \langle \hat{1}, \phi \rangle = \langle 1, \hat{\phi} \rangle = \int_{-\infty}^\infty \hat{\phi}(\xi) \ d\xi.
    \end{equation}
    Since $\phi \in \mathcal{S}$, the Fourier inversion formula applies, in which case,
    \begin{equation}
        \langle \hat{1}, \phi \rangle = \int_{-\infty}^\infty e^{2\pi i0\xi} \hat{\phi}(\xi) \ d\xi = \phi(0) = \langle \delta, \phi \rangle.
    \end{equation}
\end{proof}

Now, the other direction is easier to show.

\begin{proposition}
    The Fourier transform of the Dirac delta $\delta$ is $1$.
\end{proposition}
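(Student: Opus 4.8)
The plan is to unwind the definition of the Fourier transform on $\mathcal{S}'$ through the dual pairing and reduce everything to evaluating a Schwartz function at the origin. Fix an arbitrary test function $\phi \in \mathcal{S}$. By definition of $\mathcal{F}$ on tempered distributions, $\langle \hat{\delta}, \phi \rangle = \langle \delta, \hat{\phi} \rangle$. Here I should first note that this pairing is legitimate: by Corollary~\ref{homeo} the Fourier transform maps $\mathcal{S}$ into $\mathcal{S}$, so $\hat{\phi} \in \mathcal{S}$ and $\delta$ may be applied to it.

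Next I would apply the defining property of the Dirac delta to obtain $\langle \delta, \hat{\phi} \rangle = \hat{\phi}(0)$. The whole computation then hinges on evaluating $\hat{\phi}$ at $\xi = 0$ using the engineering-convention integral from Definition~\ref{F2}:
\begin{equation}
    \hat{\phi}(0) = \int_{-\infty}^\infty e^{-2\pi i x \cdot 0}\, \phi(x)\ dx = \int_{-\infty}^\infty \phi(x)\ dx.
\end{equation}
The exponential collapses to $1$ because the frequency variable is set to zero, which is the only substantive step.

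Finally I would recognize the resulting integral as precisely the action of the constant distribution $1$ on $\phi$, namely $\int_{-\infty}^\infty \phi(x)\ dx = \langle 1, \phi \rangle$, as used in Proposition~\ref{const}. Chaining these equalities gives $\langle \hat{\delta}, \phi \rangle = \langle 1, \phi \rangle$ for every $\phi \in \mathcal{S}$, and since $\phi$ was arbitrary we conclude $\hat{\delta} = 1$ as elements of $\mathcal{S}'$.

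I do not expect any real obstacle here; this is the ``easy direction'' flagged in the text, and unlike the proof of $\hat{1} = \delta$ it requires no appeal to the Fourier inversion formula. The only point deserving a word of care is the implicit justification that $\hat{\phi} \in \mathcal{S}$, so that $\langle \delta, \hat{\phi}\rangle$ is well defined; this is immediate from Corollary~\ref{homeo} and needs no further argument.
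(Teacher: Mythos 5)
Your proof is correct and follows exactly the same chain of equalities as the paper's own argument: $\langle \hat{\delta}, \phi \rangle = \langle \delta, \hat{\phi} \rangle = \hat{\phi}(0) = \int_{-\infty}^\infty \phi(x)\ dx = \langle 1, \phi \rangle$. Your added remark that $\hat{\phi} \in \mathcal{S}$ (via Corollary~\ref{homeo}) is a fine point of care, though the paper leaves it implicit.
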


\begin{proof}
    Notice that
    \begin{equation}
        \langle \hat{\delta}, \phi \rangle = \langle \delta, \hat{\phi} \rangle = \hat{\phi}(0) = \int_{-\infty}^\infty e^{-2\pi ix0} \phi(x) \ dx = \int_{-\infty}^\infty \phi(x) \ dx = \langle 1, \phi \rangle.
    \end{equation}
\end{proof}

\subsection{Dirac delta and Exponential function}\label{sec34}

In the previous section, we demonstrated that the constant function and the Dirac delta are Fourier transforms of each other. In this section, we will first apply the time and frequency shifting properties of the Fourier transform in Proposition~\ref{shifting} to obtain two more Fourier transform pairs. Then, we will apply Corollary~\ref{re_im} to obtain the Fourier transforms of the cosine and sine functions.

An argument similar to the proof of Proposition~\ref{const} (how?) shows that exponential functions of the form $e^{2\pi i x \xi_0}$ defines a tempered distribution. The following proposition demonstrates that the exponential function $e^{2\pi i x \xi_0}$ and the Dirac delta are Fourier transforms of each other. Proofs for both equations are trivial. Take $\phi = \delta$ in Equations~(\ref{fs}) and (\ref{ts}) and the results are immediate.

\begin{proposition}
    \quad
    \begin{enumerate}
        \item If $f(x) = e^{2\pi i x \xi_0}$, then
        \begin{equation}
            \hat{f}(\xi) = \delta(\xi - \xi_0);
        \end{equation}
        \item if $f(x) = \delta(x - x_0)$, then
        \begin{equation}
            \hat{f}(\xi) = e^{-2\pi i\xi x_0}.
        \end{equation}
    \end{enumerate}
\end{proposition}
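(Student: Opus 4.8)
The plan is to obtain both Fourier pairs directly from the shifting rules of Proposition~\ref{shifting} together with the two transforms $\hat{1} = \delta$ and $\hat{\delta} = 1$ established in Section~\ref{sec33}. For the first statement I would write $f(x) = e^{2\pi i x \xi_0} = e^{2\pi i x \xi_0}\cdot 1$ and apply the frequency-shifting identity~(\ref{fs}) with the constant distribution $\phi = 1$ in the role of the multiplicand, so that $\hat{f}(\xi) = \hat{1}(\xi - \xi_0) = \delta(\xi - \xi_0)$. For the second statement I would recognise $f(x) = \delta(x - x_0)$ as the translate of $\delta$ by $x_0$ and apply the time-shifting identity~(\ref{ts}) with $\phi = \delta$, giving $\hat{f}(\xi) = e^{-2\pi i x_0 \xi}\,\hat{\delta}(\xi) = e^{-2\pi i x_0\xi}$. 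It is worth flagging that it is $\phi = 1$ (not $\phi = \delta$) that produces the first pair, since substituting $\delta$ into~(\ref{fs}) would merely reproduce $\hat{\delta}$.

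The one genuine subtlety, and the step I expect to carry all the weight, is that Proposition~\ref{shifting} was proved for \emph{Schwartz} functions $\phi \in \mathcal{S}$, whereas here $1$ and $\delta$ lie in $\mathcal{S}' \setminus \mathcal{S}$. Thus the real task is to promote~(\ref{fs}) and~(\ref{ts}) to the level of tempered distributions. I would do this by duality: define, for $u \in \mathcal{S}'$, the translate $\tau_{x_0}u$ and the modulate $M_{\xi_0}u$ through
\begin{equation}
    \langle \tau_{x_0} u, \phi \rangle := \langle u, \tau_{-x_0}\phi \rangle, \qquad \langle M_{\xi_0} u, \phi \rangle := \langle u, M_{\xi_0}\phi \rangle,
\end{equation}
where $(\tau_{x_0}\phi)(x) = \phi(x - x_0)$ and $(M_{\xi_0}\phi)(x) = e^{2\pi i x \xi_0}\phi(x)$; these are well defined because translation and multiplication by $e^{2\pi i x \xi_0}$ both map $\mathcal{S}$ continuously into itself. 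Transferring the Schwartz-level identities through the dual pairing, and using that $\mathcal{F}$ is a homeomorphism of $\mathcal{S}$ (Corollary~\ref{homeo}), then yields the distributional rules $\widehat{M_{\xi_0} u} = \tau_{\xi_0}\hat{u}$ and $\widehat{\tau_{x_0} u} = M_{-x_0}\hat{u}$. Applying these to $u = 1$ and $u = \delta$ respectively returns the two claims verbatim.

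Alternatively, to keep the argument entirely self-contained I would bypass the extension and unwind each statement straight from the definition of $\mathcal{F}$ on $\mathcal{S}'$. For the first, $\langle \hat{f}, \phi \rangle = \langle f, \hat{\phi}\rangle = \int_{-\infty}^\infty e^{2\pi i \xi_0 x}\,\hat{\phi}(x)\,dx = \phi(\xi_0) = \langle \delta(\cdot - \xi_0), \phi\rangle$, where the middle equality is precisely the Fourier inversion formula evaluated at $\xi_0$; for the second, $\langle \hat{f}, \phi\rangle = \langle \delta(\cdot - x_0), \hat{\phi}\rangle = \hat{\phi}(x_0) = \int_{-\infty}^\infty e^{-2\pi i x_0 x}\phi(x)\,dx = \langle e^{-2\pi i x_0(\cdot)}, \phi\rangle$. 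Either route works; the shifting-rule route is shorter once the distributional versions are in hand, so the main obstacle is really just verifying that the promotion of Proposition~\ref{shifting} to $\mathcal{S}'$ is legitimate, after which both pairs fall out immediately.
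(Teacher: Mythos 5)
Your proposal is correct, and its first route is the very one the paper takes: feed the shifting identities of Proposition~\ref{shifting} the pairs $\hat{1} = \delta$ and $\hat{\delta} = 1$. But you improve on the paper's one-line argument in two ways. First, the paper literally instructs the reader to ``take $\phi = \delta$ in Equations~(\ref{fs}) and (\ref{ts})''; as you observe, substituting $\delta$ into~(\ref{fs}) only reproduces $\hat{\delta} = 1$ (since $e^{2\pi i x \xi_0}\delta(x) = \delta(x)$), and it is the substitution $\phi = 1$ that yields $\hat{f}(\xi) = \delta(\xi - \xi_0)$ --- so you caught a genuine misstatement in the paper. Second, you are right that Proposition~\ref{shifting} is stated and proved only for $\phi \in \mathcal{S}$, while $1$ and $\delta$ lie in $\mathcal{S}' \setminus \mathcal{S}$; the paper silently applies the Schwartz-level identities to distributions, whereas you supply the missing step, namely the extension of translation and modulation to $\mathcal{S}'$ by duality (legitimate because both operations map $\mathcal{S}$ continuously into itself), giving $\widehat{M_{\xi_0} u} = \tau_{\xi_0}\hat{u}$ and $\widehat{\tau_{x_0} u} = M_{-x_0}\hat{u}$. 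Your fallback route --- unwinding $\langle \hat{f}, \phi \rangle = \langle f, \hat{\phi} \rangle$ directly and invoking Fourier inversion at $\xi_0$ for the first pair --- is exactly the technique the paper itself uses in Section~\ref{sec33} to prove $\hat{1} = \delta$ and $\hat{\delta} = 1$, and it has the advantage of requiring no promotion of Proposition~\ref{shifting} to $\mathcal{S}'$ at all. Either of your routes is complete; the paper's own proof, read literally, is not.
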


Now, notice that 
\begin{equation}
     \cos(2 \pi x \xi_0) = \Re[e^{2\pi i x \xi_0}] \quad \text{and} \quad \sin(2 \pi x \xi_0) = \Im[e^{2\pi i x \xi_0}].
\end{equation}
So, Corollary~\ref{re_im} applies. With some algebraic manipulation, we obtain the following corollary.

\begin{corollary}
    \quad
    \begin{enumerate}
        \item If $f(x) = \cos(2\pi x \xi_0)$, then
        \begin{equation}
            \frac{1}{2}\left[\delta(\xi + \xi_0) + \delta(\xi - \xi_0)\right];
        \end{equation}
        \item if $f(x) = \sin(2\pi x \xi_0)$, then
        \begin{equation}
            \frac{1}{2i}\left[\delta(\xi + \xi_0) - \delta(\xi - \xi_0)\right].
        \end{equation}
    \end{enumerate}
\end{corollary}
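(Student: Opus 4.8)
The plan is to read off both transforms directly from Corollary~\ref{re_im}, feeding in the single fact that the paper has just established: the complex exponential $\phi(x) = e^{2\pi i x \xi_0}$ has Fourier transform $\hat{\phi}(\xi) = \delta(\xi - \xi_0)$. Since
\begin{equation}
    \cos(2\pi x \xi_0) = \Re[\phi(x)] \quad\text{and}\quad \sin(2\pi x \xi_0) = \Im[\phi(x)],
\end{equation}
Corollary~\ref{re_im} writes each transform purely in terms of $\hat{\phi}(\xi)$ and $\overline{\hat{\phi}(-\xi)}$, so the whole computation collapses to evaluating these two quantities for $\hat{\phi} = \delta(\cdot - \xi_0)$.

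First I would pin down $\overline{\hat{\phi}(-\xi)}$ as a distribution. Here $\hat{\phi}(-\xi)$ is the reflection of $\delta(\cdot - \xi_0)$; testing it against $\psi \in \mathcal{S}$ returns the value $\psi(-\xi_0)$, which identifies the reflection as $\delta(\xi + \xi_0)$ because the Dirac delta is even in the distributional sense. As $\delta$ is moreover real-valued, conjugation acts trivially, giving $\overline{\hat{\phi}(-\xi)} = \delta(\xi + \xi_0)$. With this single input the cosine case is immediate:
\begin{equation}
    \widehat{\cos(2\pi x \xi_0)}(\xi) = \tfrac{1}{2}\left(\hat{\phi}(\xi) + \overline{\hat{\phi}(-\xi)}\right) = \tfrac{1}{2}\left[\delta(\xi - \xi_0) + \delta(\xi + \xi_0)\right],
\end{equation}
which is exactly the asserted form (the two deltas commute, so the order is immaterial).

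The sine case runs in parallel, now carrying the $\tfrac{1}{2i}$ prefactor and a minus sign:
\begin{equation}
    \widehat{\sin(2\pi x \xi_0)}(\xi) = \tfrac{1}{2i}\left(\hat{\phi}(\xi) - \overline{\hat{\phi}(-\xi)}\right) = \tfrac{1}{2i}\left[\delta(\xi - \xi_0) - \delta(\xi + \xi_0)\right].
\end{equation}
The main obstacle here — really the only delicate point in the whole argument — is the sign and term-order bookkeeping, since the antisymmetry of the bracket interacts with the $\tfrac{1}{2i}$. As an independent check I would bypass Corollary~\ref{re_im} entirely and apply Euler's formula, $\sin(2\pi x \xi_0) = \tfrac{1}{2i}\left(e^{2\pi i x \xi_0} - e^{-2\pi i x \xi_0}\right)$, then invoke linearity together with the exponential--delta pair and its reflection $\widehat{e^{-2\pi i x \xi_0}} = \delta(\xi + \xi_0)$; this again yields $\tfrac{1}{2i}\left(\delta(\xi - \xi_0) - \delta(\xi + \xi_0)\right)$. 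Before finalizing I would reconcile this against the displayed expression $\tfrac{1}{2i}\left[\delta(\xi + \xi_0) - \delta(\xi - \xi_0)\right]$, since the two differ by an overall sign; the computation above indicates the statement (and the corresponding entry in Table~\ref{tab}) carries a sign typo in the sine row that should be corrected.
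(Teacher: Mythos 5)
Your proof is correct and follows essentially the same route as the paper: write cosine and sine as the real and imaginary parts of $e^{2\pi i x \xi_0}$, apply Corollary~\ref{re_im} with $\hat{\phi}(\xi) = \delta(\xi - \xi_0)$, and use the evenness and realness of $\delta$ to identify $\overline{\hat{\phi}(-\xi)} = \delta(\xi + \xi_0)$. Your sign diagnosis is also right: the transform of $\sin(2\pi x \xi_0)$ is $\frac{1}{2i}\left[\delta(\xi - \xi_0) - \delta(\xi + \xi_0)\right]$, so the sine formula displayed in the statement (and the $\hat{f}_2$ sine entry in Table~\ref{tab}) carries an overall sign error; indeed, the $\hat{f}_1$ entry in that same table row, $\frac{\sqrt{2\pi}}{2i}\left[\delta(\xi - 2\pi\xi_0) - \delta(\xi + 2\pi\xi_0)\right]$, is what one obtains by converting \emph{your} answer via Equation~(\ref{convert}), not the displayed one, so the table is internally inconsistent exactly as your Euler-formula cross-check predicts.
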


\subsection{Dirac comb}\label{sec35}

The Dirac comb is a powerful distributional tool. It enables us to model sampling of a function. For example, in MRI, we are usually concerned with sampling in the frequency domain, i.e., $k$-space, where the acquired raw data in $k$-space can be modeled as a pointwise product between the underlying function in $k$-space and the Dirac comb with an appropriately chosen sampling period. Additional applications of the Dirac comb involves aliasing and the discrete Fourier transform (DFT) among many others.

\begin{definition}[Dirac comb]
    Define the Dirac comb, denoted $\Sh$, by
    \begin{equation}
        \Sh(x) := \sum_{n = -\infty}^\infty \delta(x - n).
    \end{equation}
\end{definition}

\begin{proposition}
    The Dirac comb $\Sh: \mathcal{S} \to \mathbb{C}$ is a tempered distribution. 
\end{proposition}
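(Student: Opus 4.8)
The plan is to follow the same strategy used for the constant function in Proposition~\ref{const}, with the integral replaced by a sum. First I would make the action of $\Sh$ on a test function explicit: since translating the Dirac delta gives $\langle \delta(\cdot - n), \phi \rangle = \phi(n)$, the defining series yields, for $\phi \in \mathcal{S}$,
\begin{equation}
    \langle \Sh, \phi \rangle = \sum_{n=-\infty}^\infty \langle \delta(\cdot - n), \phi \rangle = \sum_{n=-\infty}^\infty \phi(n).
\end{equation}
Before anything else one must check that this series converges, so that $\Sh$ is genuinely a well-defined functional; linearity is then immediate from the linearity of each summand.

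The key observation is that Schwartz functions decay faster than any polynomial, which is precisely what the seminorms $q_N$ encode. Taking $N = 2$, for every $n$ we have $(1+|n|)^2 |\phi(n)| \le q_2(\phi)$, hence $|\phi(n)| \le q_2(\phi)/(1+|n|)^2$. Summing over $n$ gives
\begin{equation}
    \sum_{n=-\infty}^\infty |\phi(n)| \le q_2(\phi) \sum_{n=-\infty}^\infty \frac{1}{(1+|n|)^2} =: C\, q_2(\phi),
\end{equation}
where $C$ is finite because the series $\sum_n (1+|n|)^{-2}$ is comparable to $\sum_n n^{-2}$. This single estimate does double duty: it establishes absolute convergence of the defining series and simultaneously furnishes the continuity bound.

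To finish, I would invoke the criterion of Proposition~\ref{criterion} with $N = 2$ and $C_N = C$, concluding that $\Sh \in \mathcal{S}'$. The step I expect to be the main subtlety is not the continuity estimate itself but the prerequisite verification that $\sum_n \phi(n)$ converges at all, since $\Sh$ is defined as an infinite sum of distributions and a priori one cannot assume such a sum defines a functional on $\mathcal{S}$. The reason the argument goes through is the choice $N = 2$ rather than $N = 1$: a merely bounded decay rate would not be summable, so we need the quadratic weight $(1+|n|)^2$, exactly mirroring the role of the weight $(1+|x|)^2$ and the finiteness of $\int (1+|x|)^{-2}\,dx$ in Proposition~\ref{const}.
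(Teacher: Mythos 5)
Your proof is correct and follows essentially the same route as the paper's: linearity is immediate, and continuity is obtained by inserting the weight $(1+|n|)^2$, using the finiteness of $\sum_n (1+|n|)^{-2}$, and applying Proposition~\ref{criterion} with $N=2$ and $C_N = \sum_n (1+|n|)^{-2}$, exactly mirroring Proposition~\ref{const}. Your explicit remark that the same estimate also establishes absolute convergence (hence well-definedness) of $\sum_n \phi(n)$ is a point the paper leaves implicit, but it is the identical computation, not a different argument.
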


\begin{proof}
    It is clear that $\Sh: \mathcal{S} \to \mathbb{C}$ is linear. We only need to check continuity using Proposition~\ref{criterion}. Indeed, for any fixed $\phi \in \mathcal{S}$, we have
    \begin{align}
        \vert\langle \Sh, \phi \rangle\vert = \left\vert \sum_{n = -\infty}^\infty \phi(n) \right\vert & = \left\vert \sum_{n = -\infty}^\infty \frac{1}{(1 + \vert n\vert)^2} (1 + \vert n\vert)^2 \phi(n) \right\vert \\
        & \le \left( \sum_{n = -\infty}^\infty \frac{1}{(1 + \vert n\vert)^2} \right) \sup_{x \in \mathbb{R}} \vert(1 + \vert x\vert)^2 \phi(x)\vert.
    \end{align}
    Finally, take $N = 2$ and 
    \begin{equation}
        C_N = \sum_{n = -\infty}^\infty \frac{1}{(1 + \vert n\vert)^2} < \infty,
    \end{equation}
    and apply Proposition~\ref{criterion} to finish.
\end{proof}

Now, we will state and prove the Poisson summation formula, which is a powerful tool and can be used to derive the Nyquist–Shannon sampling theorem. And we will derive the Fourier transform of the Dirac comb as a corollary of the Poisson summation formula.

\begin{proposition}[Poisson summation formula]\label{psf}
    If $\phi \in \mathcal{S}$, then
    \begin{equation}
        \sum_{n = -\infty}^\infty \phi(x + n) = \sum_{n = -\infty}^\infty \hat{\phi}(n)e^{2\pi inx}.
    \end{equation}
    In particular, setting $x = 0$,
    \begin{equation}
        \sum_{n = -\infty}^\infty \phi(n) = \sum_{n = -\infty}^\infty \hat{\phi}(n).
    \end{equation}
\end{proposition}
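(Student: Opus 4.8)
The plan is to prove the identity by recognizing the left-hand side as the periodization of $\phi$ and expanding it in a Fourier series. Define
\begin{equation}
    P(x) := \sum_{n = -\infty}^\infty \phi(x + n).
\end{equation}
First I would establish that this series converges and defines a smooth function that is periodic with period $1$. Periodicity is immediate from reindexing the sum. For convergence, I would use the Schwartz decay of $\phi$: by the definition of the seminorm $q_2$, we have $\vert \phi(x+n) \vert \le q_2(\phi)\,(1 + \vert x + n \vert)^{-2}$, and the resulting series of bounds converges uniformly on compact sets by comparison with $\sum_n (1 + \vert n \vert)^{-2}$ (the same summable series used in the proof that $\Sh$ is a tempered distribution). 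Term-by-term differentiation, with $\partial^\alpha \phi$ controlled by $q_N(\phi)$ in the same fashion, shows $P \in C^\infty$.

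Since $P$ is smooth and $1$-periodic, I would invoke the theory of Fourier series to write
\begin{equation}
    P(x) = \sum_{k = -\infty}^\infty c_k\, e^{2\pi i k x}, \qquad c_k = \int_0^1 P(x)\, e^{-2\pi i k x}\, dx.
\end{equation}
The heart of the proof is the computation of $c_k$. I would substitute the definition of $P$, interchange the sum and the integral (justified by the uniform convergence established above), and then in each term substitute $u = x + n$. Because $k$ and $n$ are integers, the phase factor satisfies $e^{2\pi i k n} = 1$, so the shifted exponential is unchanged, and the integrals over $[n, n+1]$ reassemble into a single integral over $\mathbb{R}$:
\begin{equation}
    c_k = \sum_{n} \int_n^{n+1} \phi(u)\, e^{-2\pi i k u}\, du = \int_{-\infty}^\infty \phi(u)\, e^{-2\pi i k u}\, du = \hat{\phi}(k).
\end{equation}
Substituting $c_k = \hat{\phi}(k)$ back into the Fourier series for $P$ yields exactly the claimed identity, and setting $x = 0$ gives the special case.

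The main obstacle is the analytic justification that $P$ genuinely equals its Fourier series pointwise — for a merely integrable periodic function this can fail. Here I would lean on Corollary~\ref{homeo}: since the Fourier transform is a homeomorphism of $\mathcal{S}$ onto itself, we have $\hat{\phi} \in \mathcal{S}$, so the coefficients $c_k = \hat{\phi}(k)$ are rapidly decreasing. Consequently the series $\sum_k \hat{\phi}(k)\, e^{2\pi i k x}$ converges absolutely and uniformly, and standard Fourier-series theory then guarantees that this continuous limit coincides with the smooth function $P$. The remaining bookkeeping — the uniform convergence needed to license the sum–integral interchange, and the substitution collapsing the periodized integrals — is routine given the Schwartz estimates.
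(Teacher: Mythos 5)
Your proposal is correct and follows essentially the same route as the paper: define the periodization $\sum_n \phi(x+n)$, identify its Fourier coefficients as $\hat{\phi}(k)$ by interchanging sum and integral and reassembling the shifted integrals over $[n, n+1]$ into $\int_{-\infty}^{\infty}$. The only difference is one of completeness, in your favor: the paper leaves periodicity, uniform convergence, and (implicitly) the pointwise equality of the periodization with its Fourier series as unproved ``check!'' steps, whereas you supply the Schwartz-decay comparison with $\sum_n (1+\vert n \vert)^{-2}$ and the absolute summability of the coefficients $\hat{\phi}(k)$ needed to close that last gap.
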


\begin{proof}
    Define 
    \begin{equation}\label{sum}
        f(x) := \sum_{n = -\infty}^\infty \phi(x + n).
    \end{equation}
    It is clear that $f$ is a periodic function with period $1$ (check!) and the infinite series converge uniformly on compact sets (check!), and, in particular, on $[0, 1]$. 
    
    It suffices to check that $\hat{\phi}(n)$ are the coefficients in the Fourier series expansion of $f$. Indeed, the coefficients in the Fourier series expansion of $f$ are given by
    \begin{equation}
        c_m = \int_0^1 f(x) e^{-2\pi imx} \ dx = \int_0^1 \sum_{n = -\infty}^\infty \phi(x + n) e^{-2\pi imx} \ dx.
    \end{equation}
    Since the infinite sum in (\ref{sum}) converges uniformly on $[0,1]$, we can interchange limit processes,\footnote{That is, interchanging the integral and the infinite sum.} which gives
    \begin{align}
        c_m = \sum_{n = -\infty}^\infty \int_0^1 \phi(x + n) e^{-2\pi imx} \ dx & = \sum_{n = -\infty}^\infty \int_{n}^{n+1} \phi(x) e^{-2\pi imx} \ dx \\
        & = \int_{-\infty}^\infty \phi(x) e^{-2\pi imx} \ dx = \hat{\phi}(m).
    \end{align}
\end{proof}

\begin{corollary}
    The Fourier transform of the Dirac comb $\Sh$ is $\Sh$.
\end{corollary}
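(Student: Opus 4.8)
The plan is to unwind the definition of the Fourier transform on tempered distributions and then read off the identity directly from the Poisson summation formula (Proposition~\ref{psf}). Recall that $\hat{\Sh} \in \mathcal{S}'$ is defined by duality, so for an arbitrary test function $\phi \in \mathcal{S}$ we have $\langle \hat{\Sh}, \phi \rangle = \langle \Sh, \hat{\phi} \rangle$. The right-hand side makes sense because $\mathcal{F}$ maps $\mathcal{S}$ into $\mathcal{S}$ (Corollary~\ref{homeo}), so $\hat{\phi}$ is itself a legitimate Schwartz test function on which $\Sh$ may act.

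First I would evaluate the inner pairing using the definition of the Dirac comb as a sum of shifted deltas, giving
\begin{equation}
    \langle \hat{\Sh}, \phi \rangle = \langle \Sh, \hat{\phi} \rangle = \sum_{n = -\infty}^\infty \hat{\phi}(n).
\end{equation}
Convergence of this sum is not a concern: it is exactly the quantity shown to be finite when verifying that $\Sh$ is a tempered distribution, applied now to $\hat{\phi} \in \mathcal{S}$ in place of $\phi$.

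Next I would invoke the special case $x = 0$ of the Poisson summation formula, namely $\sum_{n} \hat{\phi}(n) = \sum_{n} \phi(n)$, to rewrite the preceding expression as
\begin{equation}
    \langle \hat{\Sh}, \phi \rangle = \sum_{n = -\infty}^\infty \hat{\phi}(n) = \sum_{n = -\infty}^\infty \phi(n) = \langle \Sh, \phi \rangle.
\end{equation}
Since $\phi \in \mathcal{S}$ was arbitrary, the two functionals $\hat{\Sh}$ and $\Sh$ agree on all of $\mathcal{S}$, and therefore $\hat{\Sh} = \Sh$ as elements of $\mathcal{S}'$.

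In truth there is no substantial obstacle here, since all the analytic work (uniform convergence on $[0,1]$, the Fourier-series identification of the coefficients $\hat{\phi}(m)$, and the interchange of sum and integral) has already been discharged in the proof of Proposition~\ref{psf}. The only points that warrant a word of care are purely bookkeeping: confirming that we are working consistently in the engineering convention $\hat{\cdot} = \mathcal{F}_2$ in which the Poisson formula was stated, and noting explicitly that $\hat{\phi} \in \mathcal{S}$ so that both the pairing $\langle \Sh, \hat{\phi}\rangle$ and the resulting series are well defined. With those observations in place the corollary follows in a single line from the formula.
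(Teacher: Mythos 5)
Your proof is correct and follows exactly the same route as the paper's: unwind the duality definition $\langle \hat{\Sh}, \phi \rangle = \langle \Sh, \hat{\phi} \rangle$ and apply the $x = 0$ case of the Poisson summation formula (Proposition~\ref{psf}) to conclude $\langle \hat{\Sh}, \phi \rangle = \langle \Sh, \phi \rangle$ for all $\phi \in \mathcal{S}$. Your additional remarks on the well-definedness of $\langle \Sh, \hat{\phi} \rangle$ and the convention being used are sensible bookkeeping but do not change the argument.
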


\begin{proof}
    Fix $\phi \in \mathcal{S}$. Proposition~\ref{psf} implies that
    \begin{equation}
        \langle \hat{\Sh}, \phi \rangle = \langle \Sh, \hat{\phi} \rangle = \sum_{n = -\infty}^\infty \hat{\phi}(n) = \sum_{n = -\infty}^\infty \phi(n) = \langle \Sh, \phi \rangle.
    \end{equation}
\end{proof}

\printbibliography

\end{document}